\newcounter{roem} %\roman{roem}
\newenvironment{enums}{\begin{list}{\normalfont (\roman{roem})}{\usecounter{roem}%
								\itemsep0mm \topsep1mm
                \labelwidth7mm \leftmargin10mm \labelsep3mm }}{\end{list}}
\theoremstyle{plain}% default
\newtheorem{thm}{Theorem}%[section]
\newtheorem{lem}[thm]{Lemma}
\newtheorem{beh}[thm]{Conjecture}
\newtheorem*{lem*}{Lemma}
\newtheorem*{thm*}{Theorem}
\newtheorem*{satz*}{Satz}
\theoremstyle{definition}
\newtheorem{defn}[thm]{Definition}
\def\eqref#1{(\ref{#1})}
\def\comp{\mathrm{Comp}}
\def\stab{\mathrm{Stab}}
\def\z1{\mathrm{Z1}}
\renewcommand{\stab}{\mathrm{Stab}}
\newcommand{\id}{\mathrm{id}}
\newcommand{\im}{\operatorname{Im}}
\def\endo{\operatorname{End}}
\renewcommand{\hom}{\operatorname{Hom}}
\newcommand{\ct}{\Gamma}
\newcommand{\cak}[1]{\Gamma_{#1}}
\newcommand{\cc}{\operatorname{cc}}
\newcommand{\vertex}[1]{\mathrm{V}(#1)}
\newcommand{\polycyclic}[1]{\mathrm{Pc}}
\newcommand{\aut}{\operatorname{Aut}}
\def\rho{\varrho}
\begin{document}

\title{Corrigendum to ``On the classification of prime-power groups by coclass``}
\author{Martin Couson}

\maketitle

\begin{abstract}
A further step towards a possible classification of $p$-groups by coclass was made by Eick \& Leedham-Green \cite{Eick2007} in 2007. They gave a constructive proof of the ultimate periodicity of shaved coclass graphs. By giving a counterexample we show that there is a mistake in \cite{Eick2007}, and we fill this gap.
\end{abstract}

\section{Introduction}
In 2000 du Sautoy \cite{Sautoy2000} has proven that every shaved coclass tree is ultimately periodic. Hereby he gave a nonconstructive proof by using zeta functions. In contrast to this Eick \& Leedham-Green used homological algebra in \cite{Eick2007} to confirm this result and to state structure theorems for $p$-groups. These structure theorems yield several results for invariants of $p$-groups, for instance almost all finite $2$-groups of fixed coclass satisfy the divisibility conjecture \cite{Eick2006}.

In this paper we show that there is a mistake in \cite{Eick2007} by giving a counterexample, we fill this gap and thus complete the work in \cite{Eick2007}. First let us state the main results of~\cite{Eick2007} and recall some notation.

\section*{Coclass graph}
Let $p$ be a prime. The coclass of a finite $p$-group $G$ of order $p^n$ and nilpotency class $c$ is defined as $\cc G := n - c$. The isomorphism classes of the $p$-groups of fixed coclass $r \in \mathbb{N}$ form the vertices of the so-called coclass graph $\mathcal{G}(p,r)$. Further, there is a directed edge from a group $G$ to another group $H$ in $\mathcal{G}(p,r)$ if there exists a normal subgroup $N$ of $H$ such that $H/N \cong G$. The coclass graphs are far away from being fully understood, but there exist some very interesting results about them \cite[Section 2]{Eick2007}: for example there are only finitely many infinite paths in $\mathcal{G}(p,r)$ up to equivalence. These paths arise from pro-$p$-groups of coclass $r$; hereby we say that a pro-$p$-group $S$ with lower central series
$$
S = S_1 \geq S_2 \geq S_3 \geq \cdots
$$
has coclass $r$ if every quotient $S/S_i$ is finite and $\lim \cc S/S_i = r$. For each isomorphism class of a pro-$p$-group $S$ of coclass $r$ we can define a subgraph $\mathcal{T}(S)$ in $\mathcal{G}(p,r)$ called coclass tree: let $t = t(S)$ be the minimal natural number such that $S/S_t$ is not isomorphic to a quotient of a pro-$p$-group $R$ of coclass $r$ with $R \not \cong S$. Note that such a number $t(S)$ exists, since up to isormorphism there exists only finite manly pro-$p$-groups of coclass $r$ by the proven Coclass Conjecture D. Then $\mathcal{T}(S)$ is defined as the descendant tree of $S/S_t$.

The coclass trees could be considered as the essence of the coclass graph: they correspond 1-1 to the equivalence classes of infinite paths in $\mathcal{G}(p,r)$ and they form a partition of all but finitely many vertices of $\mathcal{G}(p,r)$.

Hence it is essential to understand coclass trees in order to get a deeper insight into $p$-groups.

\section*{Shaved coclass tree}

In general coclass trees are not ultimately periodic as a direct consequence of Coclass Conjecture D and \cite[Remark 4]{Eick2007}. But shaved coclass trees are. In particular for a shaved coclass tree of periodicity $m$, say, and for $k,l \gg 0$ with $k = l \mod m$ there exists a graph isomorphism between the $k$-th and $l$-th branch of the shaved coclass tree. These graph isomorphisms are described explicitly by Eick \& Leedham-Green~\cite{Eick2007}. This description requires a deep analysis of the structure of the groups in a shaved coclass graph.

For a natural number $k$ and a coclass tree $\mathcal{T}(S)$, the shaved coclass tree $\mathcal{T}_k(S)$ is defined as the subgraph of $\mathcal{T}(S)$ induced by the vertices $G \in \mathcal{T}(S)$ with distance at most $k$ from some factor group $S/S_i$ with $i \geq t(S)$; that is, there exists a normal subgroup $N$ of $G$ of order at most $p^k$ with $G/N \cong S/S_i$. Evidently, every group $G$ of $\mathcal{T}(S)$ lies in $\mathcal{T}_k(S)$ for some $k \in \mathbb{N}$. If $p$ equals $2$, then there exists even a natural number $k$ such that $\mathcal{T}_k(S) = \mathcal{T}(S)$.

Now we fix $k \in \mathbb{N}$. Eick \& Leedham-Green showed the existence of a natural number $l = l(k,S)$, a free $\mathbb{Z}_p$-module $T$ and a finite $p$-group $R$ acting uniserially on $T$ such that every descendant of $S/S_{l}$ in $\mathcal{T}_k(S)$ is a group extension of a quotient of $T$ by $R$. In order to recall the definition of uniserial action we introduce the following notation.

For a group $G$ and a $G$-module $A$ we write $a.g$ to denote the image of $a \in A$ under $g \in G$. Further, let $\mathbb{Z}_p$ denote the $p$-adic integers and let the rank of a $\mathbb{Z}_p$-module be defined as the rank of its Frattini quotient. Note that the Frattini quotient of a $\mathbb{Z}_p$-module $N$ is $N/pN$ and hence elementary abelian. 
\begin{defn}[cf. {\cite[Definition 4.1.1]{Leedham-Green2002}}]
Let $G$ be a finite $p$-group, acting on a $\mathbb{Z}_p$-module $N$ of finite rank $d$.
\begin{enums}
\item The \emph{lower $G$-central series of $N$} is defined as 
$$ 
N = N_0 \geq N_1 \geq N_2 \geq \cdots,
$$
where $N_0 := N$ and $N_{i+1}:= N_{i+1}(G) := [G, N_i] = \langle n.g -n \mid n \in N_i, g \in G\rangle$ for each $i \geq 0$.
\item $G$ acts \emph{uniserially} \index{uniserial action} on $N$, if $[N_i : N_{i+1}] = p$ for every $i$ with $N_i \neq 0$.
\end{enums}
\end{defn}

\begin{thm}[{\cite[Theorem 7]{Eick2007}}] \label{thmdescendantscctree}
There exists a natural number $l = l(k,S)$ such that 
\begin{enums}
 \item the factor group $R := S/S_{l}$ lies in $\mathcal{T}_k(S)$, the group $T := S_{l}$ is a free $\mathbb{Z}_p$-module of finite rank $d \in \mathbb{N}_0$ on which $R$ acts uniserially with $R$-central series $T = T_0 > T_1 > \cdots$, say, and
 \item every descendant $G \in \mathcal{T}_k(S)$ of $R$ is isomorphic to a group extensions of the $R$-module $T/T_n$ by $R$, where $n \in \mathbb{N}_0$ is determined by $|G| = |R| \cdot p^n$.
\end{enums}
\end{thm}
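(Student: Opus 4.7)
The plan is to leverage the classical structural results for pro-$p$-groups of finite coclass. These guarantee the existence of a threshold $l_0$ such that $S_{l_0}$ is a uniformly powerful, abelian pro-$p$-group that is a free $\mathbb{Z}_p$-module of finite rank $d$, and such that the lower central series is uniserial beyond $l_0$, that is, $[S_i : S_{i+1}] = p$ whenever $i \geq l_0$. I would define $l = l(k,S)$ to be the maximum of $t(S)$, of $l_0$, and of a further technical bound depending on $k$, chosen so that any descendant in $\mathcal{T}_k(S)$ admits the module structure required for part (ii).

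For part (i), with $R := S/S_l$ and $T := S_l$, the verification is essentially bookkeeping. The group $R$ lies in $\mathcal{T}_k(S)$ because $l \geq t(S)$ places it in $\mathcal{T}(S)$ and $R$ is at distance zero from itself. $T$ is a free $\mathbb{Z}_p$-module of rank $d$ by the choice $l \geq l_0$, and conjugation makes $T$ into a $\mathbb{Z}_p R$-module because the abelian subgroup $T$ acts trivially on itself, so the action of $S$ factors through $R$. Inductively one identifies $T_i = S_{l+i}$: the defining relation $T_{i+1} = [R, T_i]$ corresponds to the group-theoretic commutator $[S, S_{l+i}] = S_{l+i+1}$, and uniseriality $[T_i : T_{i+1}] = p$ transfers from the analogous property of the lower central series of $S$ beyond $l_0$.

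For part (ii), let $G \in \mathcal{T}_k(S)$ be a descendant of $R$ with $|G| = |R| \cdot p^n$. I would fix a normal subgroup $M$ of $G$ of order $p^n$ with $G/M \cong R$ and show $M \cong T/T_n$ as $R$-modules. Since $G \in \mathcal{T}_k(S)$, there is a normal subgroup $N$ of $G$ of order at most $p^k$ with $G/N \cong S/S_j$ for some $j \geq t(S)$, and it is this extra quotient that ties the abstract extension $1 \to M \to G \to R \to 1$ to the concrete one arising from $S$. I would first argue that $M$ is abelian and a uniserial $\mathbb{Z}_p R$-module under conjugation, using the coclass-$r$ property of $G$ together with the largeness of $l$ relative to $k$. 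Then, comparing $G$ with its quotient $S/S_j$ and pulling back through the common surjection onto $R$, I would identify $M$ as a uniserial $R$-quotient of $T$ of order $p^n$, and conclude $M \cong T/T_n$ by the uniqueness of such quotients.

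The principal obstacle is the final identification $M \cong T/T_n$: mere equality of orders is insufficient, and one must verify both that $M$ carries a uniserial $R$-action and that, up to isomorphism, there is a unique uniserial $R$-quotient of $T$ of each given order. The latter is a rigidity property of uniserial $\mathbb{Z}_p R$-modules, while the former requires the $k$-shaving hypothesis together with a careful choice of $l$ which bounds the lower central series of any $G \in \mathcal{T}_k(S)$ near the kernel $M$. Making the dependence $l = l(k,S)$ explicit and confirming that the chosen $l$ really does force the isomorphism (rather than just a surjection onto $M$) is the delicate quantitative step.
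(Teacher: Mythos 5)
This statement is quoted in the present paper from \cite{Eick2007} without proof, so there is no internal argument to compare against; I can only assess your sketch on its own terms. Part (i) is essentially fine: it is exactly the classical structure theory of pro-$p$-groups of finite coclass from \cite{Leedham-Green2002} (eventually the terms $S_i$ are abelian, torsion-free of finite rank, and the series is uniserial), and your identification $T_i = S_{l+i}$ via $[R,T_i]=[S,S_{l+i}]=S_{l+i+1}$ is correct once one notes that the abelian group $S_l$ centralizes $S_{l+i}$, so the conjugation action genuinely factors through $R$.

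Part (ii), however, contains a genuine gap at precisely the load-bearing step, and your own closing paragraph concedes it. From $G\in\mathcal{T}_k(S)$ you get a normal subgroup $N$ with $|N|\le p^k$ and $G/N\cong S/S_j$, hence (granting that the surjection $G\to R$ factors through $G/N$, which itself needs an argument about the essential uniqueness of such surjections) only $M/N\cong S_l/S_j=T/T_{j-l}$. This gives $[M,M]\le N$, i.e.\ $|[M,M]|\le p^k$ --- it does \emph{not} give $M$ abelian, and the naive bound $[\gamma_i(G),\gamma_i(G)]\le\gamma_{2i}(G)$ is useless here because the class of $G$ is unbounded over the tree while $l$ is fixed. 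Killing that last derived subgroup, and then showing that the extension $1\to N\to M\to T/T_{j-l}\to 1$ is the obvious one (so that $M$ really is an $R$-module quotient of $T$ rather than merely an abstract uniserial module of the right order --- note that twisted modules $T^{(\beta)}/T_n^{(\beta)}$ are uniserial of the same order but need not be isomorphic to $T/T_n$), is the actual content of the theorem; it requires the uniserial structure of the lower central series of $G$ together with a centralizer argument (or an appeal to the coclass theorems), none of which appears in your sketch beyond the phrase ``using the coclass-$r$ property of $G$ together with the largeness of $l$ relative to $k$''. The one piece of your final paragraph that is unproblematic is the uniqueness claim: the only open $R$-submodules of a uniserial $T$ are the $T_m$, so once $M$ is exhibited as a quotient of $T$ of order $p^n$ it is forced to be $T/T_n$; the difficulty is entirely in exhibiting it as such a quotient.
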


By Theorem \ref{thmdescendantscctree} all but finitely many groups of $\mathcal{T}_k(s)$ are group extensions of $T/T_n$ by $R$ for suitable $n$. In \cite{Eick2007} Eick \& Leedham-Green identified the elements in $H^2(R,T/T_n)$ with $n \in \mathbb{N}_0$ which describe these extensions. Further, they showed that the cohomology groups $H^2(R, T/T_n)$ with $n \in \mathbb{N}_0$ lie in finitely many isomorphism classes, and they gave explicit group isomorphisms between these cohomology groups. This led to explicit graph isomorphisms between branches of the shaved coclass tree yielding the ultimate periodicity.

Recall that the uniserial action of $R$ on the free $\mathbb{Z}_p$-module $T \cong \mathbb{Z}_p^d$ yields $T_{n+d} = p \cdot T_n$. The $R$-module isomorphism $T_n \rightarrow T_{n+d}$, $t \mapsto p\cdot t$ induces a group isomorphism $\mu := \mu(n) : H^3(R, T_n) \rightarrow H^3(R, T_{n+d})$ of cohomology groups.

\begin{thm}[{\cite[Theorem 8]{Eick2007}}] \label{thmseccohom}
There exists a natural number $m = m(l, S)$ such that for every $n \geq m$ 
\begin{enums}
 \item the cohomology group $H^2(R, T/T_n)$ is canonically isomorphic to the direct product $H^2(R, T) \oplus H^3(R, T_n)$,
 \item the map $(\id \oplus \mu) : H^2(R, T/T_n) \rightarrow H^2(R, T/T_{n+d})$ defined by 
$$
H^2(R, T) \oplus H^3(R, T_n) \rightarrow H^2(R, T) \oplus H^3(R, T_{n+d}), \ (\alpha, \beta) \mapsto (\alpha, \mu(\beta))
$$ 
and by the canonical isomorphisms of (i) is a group isomorphism.
\end{enums}
\end{thm}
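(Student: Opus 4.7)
The plan is to analyse the long exact group-cohomology sequence of the short exact sequence of $R$-modules
$$0 \to T_n \to T \to T/T_n \to 0,$$
and to argue that for $n$ large enough it degenerates to a canonically split short exact sequence. The long exact sequence reads
$$\cdots \to H^2(R,T_n) \xrightarrow{\iota_*} H^2(R,T) \to H^2(R,T/T_n) \xrightarrow{\delta} H^3(R,T_n) \xrightarrow{\iota_*} H^3(R,T) \to \cdots .$$
Since $R$ is a finite $p$-group and $T$ is a finitely generated $\mathbb{Z}_p$-module, each $H^i(R,T)$ with $i \geq 1$ is a finite abelian $p$-group. Choose $e \geq 0$ with $p^e H^2(R,T) = p^e H^3(R,T) = 0$ and set $m := ed$. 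For $n \geq m$ uniseriality gives $T_n \subseteq T_{ed} = p^e T$, so the inclusion $T_n \hookrightarrow T$ factors through $p^e T \hookrightarrow T$; but the latter inclusion, via the $R$-module isomorphism $T \to p^e T$, $t \mapsto p^e t$, is identified on cohomology with multiplication by $p^e$, which vanishes by the choice of $e$. Both maps $\iota_*$ therefore vanish, and we obtain
$$0 \to H^2(R,T) \to H^2(R,T/T_n) \xrightarrow{\delta} H^3(R,T_n) \to 0. \qquad (*)$$

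The decisive step, and the one I expect to be the main obstacle (and presumably where the proof in \cite[Theorem 8]{Eick2007} has to be repaired), is to split $(*)$ canonically. The natural candidate for a section $\sigma \colon H^3(R,T_n) \to H^2(R,T/T_n)$ of $\delta$ sends a class $[\gamma] \in H^3(R,T_n)$ to $[\beta]$, where $\tilde\beta \colon R^2 \to T$ is any $2$-cochain with $d\tilde\beta = \iota\gamma$ (such a lift exists because $\iota_*[\gamma] = 0$ in $H^3(R,T)$) and $\beta$ denotes the reduction of $\tilde\beta$ modulo $T_n$, which is automatically a $2$-cocycle. This assignment is well-defined modulo the image of $H^2(R,T)$ but ambiguous on the nose; removing the ambiguity canonically is the hard part. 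The natural tool is the uniserial $\mathbb{Z}_p[R]$-structure on $T$: the $R$-central series $T = T_0 > T_1 > \cdots$ provides a distinguished filtration of cochains from which a preferred representative lift $\tilde\beta$ should be chosen in a way compatible with the transition $n \rightsquigarrow n+d$. Verifying that such a choice can be made coherently, is additive in $[\gamma]$, and actually sections $\delta$ is the technical heart of the argument, and turns $(*)$ into the asserted canonical direct sum decomposition.

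Part~(ii) is essentially formal once (i) is available. By uniseriality $T_{n+d} = pT_n$, and multiplication by $p$ is an $R$-module isomorphism $T_n \to T_{n+d}$, inducing on cohomology the isomorphism $\mu \colon H^3(R,T_n) \to H^3(R,T_{n+d})$. Consequently $\id \oplus \mu$ is a group isomorphism $H^2(R,T) \oplus H^3(R,T_n) \to H^2(R,T) \oplus H^3(R,T_{n+d})$, and conjugating by the canonical identifications of~(i) yields the asserted isomorphism $H^2(R,T/T_n) \to H^2(R,T/T_{n+d})$. The substance of the theorem is thus entirely contained in the canonicity assertion of~(i), which is precisely the delicate point that must be handled with care in order to extract the explicit branch isomorphisms underlying the periodicity of shaved coclass trees.
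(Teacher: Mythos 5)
The first half of your argument is sound and coincides with the paper's Lemma~\ref{lemescohoms}: since $T_n \leq T_{ed} = p^eT$, the maps $H^i(R,T_n)\to H^i(R,T)$ for $i=2,3$ factor through $H^i(R,p^eT)\to H^i(R,T)$, which is multiplication by $p^e$ up to the canonical isomorphism $H^i(R,T)\cong H^i(R,p^eT)$ and hence vanishes for suitable $e$. This yields the short exact sequence $(*)$ with its canonical inclusion and connecting map. (One side remark: the step you flag as ``where the proof in \cite{Eick2007} has to be repaired'' is not the locus of the error this corrigendum addresses; the gap in \cite{Eick2007} is in its Theorem~23 on compatible pairs, while Theorem~8 is taken as correct and its ingredients are merely re-derived and generalized here as Lemmas~\ref{lemescohoms} and~\ref{lemsescohoms}.)

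The genuine gap is the splitting of $(*)$, which you identify as ``the technical heart'' but do not carry out. Your candidate section --- lift $\gamma\in Z^3(R,T_n)$ to a cochain $\tilde\beta\in C^2(R,T)$ with $d\tilde\beta=\iota\gamma$ and reduce modulo $T_n$ --- is only well defined up to the image of $H^2(R,T)$, and the proposal to remove the ambiguity by choosing ``preferred lifts compatible with the filtration'' is never shown to be realizable, additive in $[\gamma]$, or an actual section of $\delta$; as stated it is a restatement of the problem rather than a solution. The paper's Lemma~\ref{lemsescohoms} closes this differently: one first checks that $B^2(R,T/T_n)$ lies in the image of $Z^2(R,T)$, so that splitting the cohomology sequence is equivalent to splitting the cocycle-level sequence \eqref{sescocycs} of finitely generated $\mathbb{Z}_p$-modules; writing $T_n=f\cdot D$ for a torsion-free $\mathbb{Z}_pR$-module $D\leq T$ with $f=\exp H^3(R,T_n)$, a complement $K$ of $\vartheta(Z^2(R,D))$ inside $Z^2(R,D/fD)$ is then produced explicitly from the Smith normal form of the coboundary map $d^2\colon C^2(R,D)\to C^3(R,D)$, and $K\cong H^3(R,T_n)$ follows from the already established right-exactness. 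No intrinsically canonical section is needed for the later applications: the canonical data are the inclusion $H^2(R,T)\hookrightarrow H^2(R,T/T_n)$ and the projection $\delta$, and compatibility of the chosen splittings under the passage $n\rightsquigarrow n+d$ is arranged afterwards by working with explicit cocycle representatives $\gamma_{A_n}+\delta_{A_n}$, as in the proof of Theorem~\ref{thmmumodulhom}. Your part~(ii) is indeed formal once (i) is in place.
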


We write $\mathcal{B}_i$ to denote the $i$-th branch of the coclass tree $\mathcal{T}_k(S)$, that is, $\mathcal{B}_i$ is the subgraph induced by the descendants of $S/S_i$ which are not descendants of $S/S_{i+1}$. Further, for a group $G \in \mathcal{B}_i$ of order $|R| \cdot p^n$ let $\gamma_G$ denote an element of $H^2(R, T/T_n)$ such that the group extension $E(\gamma_G)$ defined by $\gamma_G$ is isomorphic to $G$. The main result of \cite{Eick2007} is the following.

\begin{thm}[{\cite[Theorem 9]{Eick2007}}] \label{thmgraphiso}
There exists an integer $f = f(k,S)$ such that, if $i \geq f$, then the map $\pi: \mathcal{B}_i \mapsto \mathcal{B}_{i+d}: G \mapsto E( (\id \oplus \mu)(\gamma_G))$ induces a graph isomorphism (independently of the choice of the element $\gamma_G$ defining G).
\end{thm}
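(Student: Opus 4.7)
The plan is to establish, for $i \geq f$ with $f$ chosen sufficiently large that Theorems \ref{thmdescendantscctree} and \ref{thmseccohom} apply to every vertex of both $\mathcal{B}_i$ and $\mathcal{B}_{i+d}$, three properties of $\pi$: well-definedness on vertices, bijectivity, and compatibility with the descendant relation. Together these give a graph isomorphism.

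\textbf{Well-definedness on vertices.} Fix $G \in \mathcal{B}_i$ of order $|R|\cdot p^n$. The element $\gamma_G \in H^2(R, T/T_n)$ representing $G$ is determined only up to the natural action of $\aut(R) \times \aut_R(T/T_n)$, so I must show that $(\id \oplus \mu)$ takes an orbit to an orbit. This requires (a) that the canonical decomposition $H^2(R, T/T_n) \cong H^2(R,T) \oplus H^3(R, T_n)$ from Theorem \ref{thmseccohom}(i) is preserved by this action, and (b) that $\mu$ is equivariant on the $H^3$-summand, which reduces to the naturality of multiplication by $p \colon T_n \to T_{n+d}$ with respect to $R$-module automorphisms. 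This is the most delicate point, and I suspect it is where the argument in \cite{Eick2007} was incomplete: one needs $f$ large enough that any automorphism realising an isomorphism $E(\gamma_G) \cong E(\gamma_G')$ extends to an automorphism of the ambient data in a way compatible with the filtration $T_n \supset T_{n+d}$.

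\textbf{Bijectivity.} Since $\id \oplus \mu$ is a group isomorphism by Theorem \ref{thmseccohom}(ii), the assignment $\gamma \mapsto (\id \oplus \mu^{-1})(\gamma)$, followed by $E(\cdot)$, furnishes a two-sided inverse on orbits, provided well-definedness holds in both directions. Because the situation is symmetric under replacing $\mu$ by $\mu^{-1}$, this is a formal consequence of the previous step.

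\textbf{Edge preservation.} An edge $G \to H$ in $\mathcal{B}_i$, with $|H| = p \cdot |G|$, corresponds, after describing $G$ and $H$ as extensions, to the fact that $\gamma_H \in H^2(R, T/T_{n+1})$ is sent to $\gamma_G \in H^2(R, T/T_n)$ under the canonical map induced by the quotient $T/T_{n+1} \twoheadrightarrow T/T_n$. I would analyse the long exact cohomology sequence of
$$
0 \to T_{n+1}/T_{n+1+j} \to T/T_{n+1+j} \to T/T_{n+1} \to 0
$$
and its shifted version at $n+d$ in order to check that the square formed by these canonical maps and the two applications of $\id \oplus \mu$ commutes. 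On the $H^2(R,T)$-summand this is trivial; on the $H^3$-summand it follows from the naturality of the long exact sequence with respect to the $R$-module morphism given by multiplication by $p$.

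The main obstacle is the first stage: reconciling two different "naturality" statements — that of the orbit decomposition of $H^2$ under $\aut(R) \times \aut_R(T/T_n)$ and that of the splitting in Theorem \ref{thmseccohom}(i) — so that $(\id \oplus \mu)$ is in fact equivariant. Once this is done, bijectivity and edge preservation reduce to formal commutative diagrams of cohomology groups.
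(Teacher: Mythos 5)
Your step (a) in the well-definedness argument is precisely the false statement that this corrigendum exists to refute. The class $\gamma_G$ is determined up to the action of the compatible pairs $\comp(R,T/T_n)$ (not $\aut R\times\aut_R(T/T_n)$: the second component of a compatible pair is only $\beta$-semilinear), and the counterexample of Section \ref{seccounterexample} shows that for every sufficiently large $n$ this group does \emph{not} stabilize the summand $H^2(R,T)$ of $H^2(R,T/T_n)\cong H^2(R,T)\oplus H^3(R,T_n)$. Equivalently, a compatible pair of $T/T_n$ need not act like any compatible pair of the ambient module $T$, no matter how large $f$ is taken; so your proposed remedy --- choosing $f$ so that an isomorphism $E(\gamma_G)\cong E(\gamma_{G'})$ ``extends to an automorphism of the ambient data compatible with the filtration'' --- is exactly Conjecture \ref{conjtheorem23}, i.e.\ the false \cite[Theorem 23]{Eick2007}. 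The repaired route (Theorem \ref{thmbijorbits}, via Theorem \ref{thmendop}, Lemma \ref{lemnatisocomp} and Theorem \ref{thmmumodulhom}) does not try to make the summands invariant. Instead it decomposes the acting group: modulo a subgroup acting trivially on $H^2(R,A_n)$, the group $\comp(R,A_n)$ is $\Gamma/\im\rho\ltimes E_n$ with $E_n\cong H^1(R_{t_0},T_n)$ a complement of $(\endo_R T)_{A_n}$ in $\endo_R A_n$, and an explicit cocycle computation shows that $(\id\oplus\mu)$ intertwines the actions via the isomorphism $\lambda$ matching $E_n$ with $E_{n+d}=(p\cdot E)_{A_{n+d}}$. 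This works even though elements of $E_n$ push the $H^2(R,T)$-summand into $H^3(R,T_n)$: the error term $\gamma_{A_n}.\epsilon_{A_n}$ lands in the $H^3$-summand, where $\mu$ matches it with $\gamma_{A_{n+d}}.(p\,\epsilon_{A_{n+d}})$.

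Two further omissions. First, Theorem \ref{thmstrongiso} identifies isomorphism classes with orbits only for extensions of coclass $\cc R$, so before the orbit correspondence yields a bijection on vertices you must check that $(\id\oplus\mu)$ preserves the property of having coclass $r$ (Lemma \ref{lemmucoclass1}); your proposal never addresses this. Second, besides edge preservation you must show that the image of $\mathcal{B}_i$ actually lies in $\mathcal{B}_{i+d}$, i.e.\ that descendants of $S/S_i$ are sent to descendants of $S/S_{i+d}$ rather than into a different branch; the paper does this by exhibiting, for each $G$, a cocycle $\delta\in Z^2(R,T)$ with $E(\delta)\cong S$ through which the relevant extensions factor. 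Your sketch of edge preservation via naturality of the long exact sequence is plausible and close in spirit to the paper's explicit bookkeeping with representatives $\tau=\delta_{A_n}+\epsilon_{A_n}$, but the well-definedness step is the heart of the theorem and, as proposed, it rests on a false premise.
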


One major step in proving Theorem \ref{thmgraphiso} is to show that the map $(\id \oplus \mu)$ induces a 1-1~correspondence between orbits of cohomology groups under the action of compatible pairs. The above mentioned mistake occurred at this stage of the proof.

\section*{Compatible pairs}

We recall the concept of compatible pairs. For a $R$-module $V$ let $\overline{\cdot}$ denote the homomorphism $R \rightarrow \aut V$, $g \mapsto \overline{g}$ induced by the group action of $R$ on $V$. Then the set
$$
\comp(R, V) := \{ (\beta, \epsilon) \in \aut R \times \aut V \mid \overline{g^{\beta}} = \overline{g}^{\epsilon} \ \text{for all } g \in R \}
$$
is a subgroup of the direct product $\aut R \times \aut V$. The elements of $\comp(R, V)$ are called compatible pairs of $R$ and $V$. The automorphism group $\aut V$ acts naturally on the abelian group $V$ via $t.\epsilon := \epsilon(t)$ for $t\in V$ and $\epsilon \in \aut V$. Further, the group action of $\comp(R, V)$ on $Z^2(R, V)$ is defined by $\gamma^{(\beta, \epsilon)}(g, h) := \gamma(g^{\beta^{-1}}, h^{\beta^{-1}}).\epsilon$ for $\gamma \in Z^2(R, V)$ and $(\beta, \epsilon) \in \comp(R, V)$. This induces an action of $\comp(R, V)$ on $H^2(R, V)$. 

If the coclass of an extension $E(\gamma)$ with $\gamma \in H^2(R,V)$ coincides with the coclass of~$R$, then the embedding of $V$ in $E(\gamma)$ is a characteristic subgroup. This yields the following result.

\begin{thm}[{\cite[Theorem 20]{Eick2007}}] \label{thmstrongiso}
Let $V$ be a $R$-module. Let $\gamma$, $\delta \in H^2(R, V)$ be such that their corresponding extensions $E(\gamma)$ and $E(\delta)$ of $V$ by $R$ have coclass $\cc R$. Then $E(\gamma)$ is isomorphic to $E(\delta)$ if and only if $\gamma$ and $\delta$ lie in the same orbit under the action of $\comp(R, V)$.
\end{thm}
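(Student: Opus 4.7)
The approach is to work with the explicit factor-set description of both extensions: identify $E(\gamma)$ and $E(\delta)$ set-theoretically with $V \times R$, each carrying the multiplication twisted by the respective cocycle. Both implications then reduce to direct manipulations of $2$-cocycles, with the coclass hypothesis entering only in the forward direction via the preceding remark that $V$ is characteristic in each extension.

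For the direction $(\Leftarrow)$, assume $[\gamma^{(\beta, \epsilon)}] = [\delta]$ in $H^2(R, V)$, so that there exists a $1$-cochain $\eta : R \to V$ with $\delta - \gamma^{(\beta, \epsilon)} = d\eta$. Then define $\phi : E(\gamma) \to E(\delta)$ by a formula of the shape $(v, g) \mapsto (\epsilon(v) + \eta(g^{\beta}), g^{\beta})$ and verify that $\phi$ is a group homomorphism by a direct calculation. The compatibility condition $\overline{g^{\beta}} = \overline{g}^{\epsilon}$ is exactly what is needed to move $\epsilon$ past the $R$-action term appearing in the product of $E(\gamma)$, while the coboundary identity absorbs the difference between $\gamma^{(\beta, \epsilon)}$ and $\delta$; bijectivity of $\phi$ is immediate from bijectivity of $\beta$ and $\epsilon$.

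For the direction $(\Rightarrow)$, start from an arbitrary group isomorphism $\phi : E(\gamma) \to E(\delta)$. Since $\cc E(\gamma) = \cc R = \cc E(\delta)$, the remark immediately preceding the theorem yields that the images of $V$ in $E(\gamma)$ and in $E(\delta)$ are characteristic subgroups, so $\phi(V) = V$. Restriction and passage to the quotient then produce $\epsilon := \phi|_V \in \aut V$ and $\beta \in \aut R$. Computing the conjugation of an element $v \in V$ by an arbitrary lift of $g \in R$ inside $E(\gamma)$ and applying $\phi$ to both sides forces the identity $\overline{g^{\beta}} = \overline{g}^{\epsilon}$, so that $(\beta, \epsilon) \in \comp(R, V)$. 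Finally, fixing set-theoretic sections of $E(\gamma) \twoheadrightarrow R$ and $E(\delta) \twoheadrightarrow R$ and comparing the values taken by $\phi$ on them yields a $1$-cochain $\eta : R \to V$ whose coboundary equals $\delta - \gamma^{(\beta, \epsilon)}$, so $(\beta, \epsilon)$ sends $[\gamma]$ to $[\delta]$ in $H^2(R, V)$.

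The conceptual input --- that the coclass assumption makes $V$ characteristic, so that every abstract isomorphism automatically respects the extension structure up to a compatible pair --- is already supplied by the remark preceding the theorem. The main obstacle is therefore essentially bookkeeping: aligning the sign, action, and inversion conventions (the appearance of $\beta$ versus $\beta^{-1}$, left versus right $R$-action, additive versus multiplicative notation on $V$) with the definition of the $\comp(R, V)$-action fixed at the start of this section, so that the $1$-cochain constructed in each direction genuinely witnesses the desired equality on $H^2(R, V)$.
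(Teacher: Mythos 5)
Your argument is correct and is the standard proof of this result; note that the present paper does not prove Theorem \ref{thmstrongiso} at all but imports it from \cite[Theorem 20]{Eick2007}, so there is no in-paper proof to compare against. Both directions check out: the explicit homomorphism $(v,g) \mapsto (\epsilon(v) + \eta(g^{\beta}), g^{\beta})$ works for $(\Leftarrow)$, with the compatibility condition supplying exactly the identity $\epsilon(v.g) = \epsilon(v).g^{\beta}$ needed to commute $\epsilon$ past the twisted multiplication, and the $(\Rightarrow)$ direction correctly isolates the coclass hypothesis as the input guaranteeing $\phi(V)=V$. The only point deserving one extra sentence is that ``$V$ is characteristic in $E(\gamma)$'' by itself concerns automorphisms of a single group, whereas you need an isomorphism $E(\gamma) \to E(\delta)$ to carry one embedded copy of $V$ to the other; this follows because the coclass condition forces $V$ to coincide with a fixed term of the lower central series of the extension (a verbal, hence isomorphism-invariant, subgroup), which is the actual content of the remark you invoke.
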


\section*{The gap in \cite{Eick2007}}

Let us come back to our initial problem: the proof of Theorem \ref{thmgraphiso} {\cite[Theorem 9]{Eick2007}}. The graph isomorphism given in Theorem \ref{thmgraphiso} is defined by group isomorphisms $(\id \oplus \mu)$ of second cohomology groups. In order to show that $(\id \oplus \mu)$ is well-defined we should make sure that isomorphic groups are mapped to isomorphic groups under $(\id \oplus \mu): H^2(R, T/T_n) \rightarrow H^2(R,T/T_{n+d})$. By Theorem \ref{thmstrongiso} it is sufficient to prove the following.

\begin{thm} \label{thmbijorbits}
For every sufficiently large $n$ the group isomorphism $(\id \oplus \mu)$ induces an $1$-$1$ correspondence between the orbits of $H^2(R, T/T_n)$ and $H^2(R, T/T_{n+d})$ under the action of $\comp(R, T/T_n)$ and $\comp(R, T/T_{n+d})$, respectively. 
\end{thm}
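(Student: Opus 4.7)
The plan is to bring the actions on $H^2(R, T/T_n)$ and $H^2(R, T/T_{n+d})$ into the same framework by \emph{lifting} compatible pairs to the free $\mathbb{Z}_p$-module $T$ itself, then using the identification $T_{n+d} = p \cdot T_n$ to transport the action equivariantly. Concretely, I aim to associate to each $(\beta, \epsilon) \in \comp(R, T/T_n)$ some $(\beta, \tilde\epsilon) \in \comp(R, T)$ with $\tilde\epsilon$ inducing $\epsilon$ modulo $T_n$; the reduction of $\tilde\epsilon$ modulo $T_{n+d}$ will then be the compatible pair in $\comp(R, T/T_{n+d})$ which makes $(\id \oplus \mu)$ equivariant.

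The existence of such a lift hinges on the uniserial action of $R$. Because the submodules $T_i$ form the unique descending chain of $R$-invariant submodules of $T$, the intertwining condition $\overline{g^\beta} = \overline{g}^\epsilon$ forces $\epsilon$ to preserve the induced filtration on $T/T_n$. One then lifts $\epsilon$ stepwise along the tower $\cdots \twoheadrightarrow T/T_{n+2d} \twoheadrightarrow T/T_{n+d} \twoheadrightarrow T/T_n$, exploiting the $p$-adic completeness of the automorphism group of the free $\mathbb{Z}_p$-module $T$; at each stage the compatibility with $\beta$ is maintained, because any correction needed to restore it lies in a pro-$p$ kernel that can be absorbed. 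Passing to the inverse limit yields $\tilde\epsilon \in \aut T$ with the required properties. Under the canonical decomposition of Theorem~\ref{thmseccohom}, the action of $(\beta, \epsilon)$ on $H^2(R, T) \oplus H^3(R, T_n)$ splits as the action of $(\beta, \tilde\epsilon)$ on the first summand and of its restriction $(\beta, \tilde\epsilon|_{T_n})$ on the second; the reduction $(\beta, \bar{\tilde\epsilon}) \in \comp(R, T/T_{n+d})$ admits the analogous description on the target. Since multiplication by $p$ intertwines $\tilde\epsilon|_{T_n}$ with $\tilde\epsilon|_{T_{n+d}}$, the map $(\id \oplus \mu)$ is equivariant for these two actions, so it sends $\comp(R, T/T_n)$-orbits into $\comp(R, T/T_{n+d})$-orbits.

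Well-definedness of the induced map on orbits follows because two choices of lift for a given $\epsilon$ differ by an automorphism of $T$ trivial on $T/T_n$, which acts trivially on $H^2(R, T)$ and whose reduction modulo $T_{n+d}$ lies in the kernel of the action on the $H^3$-summand. Bijectivity is obtained by symmetry: given $(\beta', \epsilon') \in \comp(R, T/T_{n+d})$, the same lifting argument starting from $T/T_{n+d}$ produces a pair of $\comp(R, T)$ whose reduction modulo $T_n$ realises the inverse correspondence; together with the fact that $(\id \oplus \mu)$ is already a group isomorphism of cohomology groups, this delivers a bijection on orbit sets. The main obstacle, and the precise point where the original argument of \cite{Eick2007} needs repair, is the lifting claim: both the existence of $\tilde\epsilon$ and, crucially, the preservation of the compatibility relation with $\beta$ throughout the $p$-adic induction rely essentially on uniseriality, and care is required to check that the kernel of the orbit action on $H^3(R,T_n)$ absorbs the indeterminacy of the lift. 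The subsequent equivariance and bijectivity steps are then formal consequences.
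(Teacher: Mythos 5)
Your central step---lifting every compatible pair $(\beta,\epsilon)\in\comp(R,T/T_n)$ to a pair $(\beta,\tilde\epsilon)\in\comp(R,T)$ inducing $\epsilon$ (or at least inducing the same action on $H^2(R,T/T_n)$)---is precisely the assertion of the false \cite[Theorem 23]{Eick2007}, stated above as Conjecture \ref{conjtheorem23}, and it is exactly the claim this paper refutes. The counterexample of Section \ref{seccounterexample} exhibits, for $R=D_8$ acting on a quotient of $\mathbb{Z}_2[i]$, elements of $\endo_R(T/T_n)$ which do not stabilize the direct summand $H^2(R,T)$ of $H^2(R,T/T_n)\cong H^2(R,T)\oplus H^3(R,T_n)$; since any pair induced from $\comp(R,T)$ manifestly stabilizes the image of $H^2(R,T)\to H^2(R,T/T_n)$, such elements admit no lift of the kind you require. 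The quantitative reason your stepwise $p$-adic lifting breaks down is Theorem \ref{thmendop}: the reduction map $(\endo_R T)_{A_n}\to\endo_R A_n$ is not surjective, but has a cokernel isomorphic to $H^1(R_{t_0},T_n)$, which is in general nonzero. An element of the complement $E_n$ cannot be lifted coherently up the tower $\cdots\twoheadrightarrow T/T_{n+2d}\twoheadrightarrow T/T_{n+d}\twoheadrightarrow T/T_n$, because the corresponding complements at higher levels are obtained by multiplying by increasing powers of $p$ (note $E_{n+d}=(p\cdot E)_{A_{n+d}}$), so the inverse limit sees only $(\endo_R T)$. Your own caveat that ``care is required'' at this point does not repair the argument: the obstruction is not an indeterminacy to be absorbed, but a genuine failure of existence.

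The correct proof must therefore treat the non-liftable endomorphisms separately. The paper does this by writing $\Gamma_n=\langle\im\pi_n,\im\rho_n\rangle$, where $\im\pi_n$ consists of the pairs induced from $\comp(R,T)$ (for which your equivariance argument is essentially sound) and $\im\rho_n$ consists of pairs $(1,1+\varphi)$ with $\varphi$ in the complement $E_n\cong H^1(R_{t_0},T_n)$. Lemma \ref{lemnatisocomp} shows $\Gamma_n/\im(\pi_n\circ\rho)\cong\Gamma/\im\rho\ltimes E_n$, the isomorphism $\lambda$ matches $\epsilon_{A_n}\in E_n$ with $p\epsilon_{A_{n+d}}\in E_{n+d}$, and Theorem \ref{thmmumodulhom} verifies by a direct cocycle computation (using the bound $p^{n/d}\geq\max\{a,b\}\cdot b$ to kill the cross terms $\delta_{A_n}.\epsilon_{A_n}$) that $(\id\oplus\mu)$ intertwines the two actions. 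None of this is present in your proposal, so the proof as written does not go through.
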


We prove Theorem \ref{thmbijorbits} in Section \ref{secproofofthmbijorbits}. The original proof of Theorem \ref{thmbijorbits} given by Eick \& Leedham-Green in \cite{Eick2007} is based on \cite[Theorem 23]{Eick2007}. According to this theorem the $\comp(R, T/T_n)$-module structure of $H^2(R,T/T_n)$ coincides with its $\comp(R, T)$-module structure. The second cohomology group $H^2(R, T/T_n)$ could be considered as $\comp(R,T)$-module due to the following result.

\begin{thm}[{\cite[Lemma 21]{Eick2007}}]
Let $V$ be a uniserial $R$-module with lower $R$-central series $V = V_0 \geq V_1 \geq V_2 \geq \cdots$. If $(\nu, \tau) \in \comp(R, V)$ and $n \in \mathbb{N}$, then $V_n$ is invariant under $\tau$ and hence $(\nu, \tau)$ induces compatible pairs in $\comp(R, V_n)$ and $\comp(R, V/V_n)$.
\end{thm}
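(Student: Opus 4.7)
The plan is to extract an intertwining identity from the compatibility condition and use it to prove $\tau$-invariance of every $V_n$ by induction on $n$; the induced compatibility on $V_n$ and $V/V_n$ then drops out immediately. Note that uniseriality of the $R$-action is not actually used for this statement.

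First I would unpack the defining relation of a compatible pair. The assumption $\overline{g^\nu} = \overline{g}^\tau = \tau^{-1}\overline{g}\tau$ in $\aut V$, evaluated at an arbitrary $v \in V$ and then with $g$ replaced by $g^{\nu^{-1}}$, rearranges to the key intertwining identity
$$
\tau(v.g) \;=\; \tau(v).g^{\nu^{-1}} \qquad \text{for all } v \in V \text{ and } g \in R,
$$
which says that $\tau$ carries the action of $g$ to the action of $g^{\nu^{-1}}$.

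Next I would prove $\tau(V_n) \subseteq V_n$ by induction on $n$, the base case $n=0$ being trivial. For the inductive step, $V_{n+1}$ is generated by elements $v.g - v$ with $v \in V_n$ and $g \in R$, so applying $\tau$ and using the intertwining identity yields
$$
\tau(v.g - v) \;=\; \tau(v).g^{\nu^{-1}} - \tau(v),
$$
which by the inductive hypothesis $\tau(v) \in V_n$ is again a generator of $V_{n+1}$. Since $\comp(R,V)$ is a subgroup of $\aut R \times \aut V$, the pair $(\nu^{-1},\tau^{-1})$ is also compatible and the same argument delivers $\tau^{-1}(V_n) \subseteq V_n$; the two inclusions combine to full invariance $\tau(V_n) = V_n$. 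The restriction $\tau|_{V_n}$ and the induced automorphism $\overline{\tau} \in \aut(V/V_n)$ then inherit the identity $\overline{g^\nu} = \tau^{-1}\overline{g}\tau$ on the submodule and the quotient respectively, which is precisely the statement that $(\nu,\tau|_{V_n}) \in \comp(R,V_n)$ and $(\nu,\overline{\tau}) \in \comp(R,V/V_n)$.

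I do not expect a serious obstacle. The whole argument is a single induction built directly on the definition of the lower $R$-central series, and the only points requiring care are the translation between the compatibility identity in $\aut V$ (written multiplicatively) and its pointwise form under the right-action convention $t.\epsilon = \epsilon(t)$, together with the minor observation that the inverse pair $(\nu^{-1},\tau^{-1})$ must also be invoked in order to upgrade the inductive inclusion $\tau(V_n) \subseteq V_n$ to genuine $\tau$-invariance.
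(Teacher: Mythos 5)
Your argument is correct, and it is the standard proof of this fact; the paper itself only cites this statement from \cite[Lemma 21]{Eick2007} without reproving it, so there is nothing to compare against beyond noting that your route (intertwining identity, induction along the lower $R$-central series, and passing to the inverse pair $(\nu^{-1},\tau^{-1})$ to upgrade $\tau(V_n)\subseteq V_n$ to equality) is exactly the expected one. Whether the intertwining identity carries $\overline{g}$ to $\overline{g^{\nu}}$ or to $\overline{g^{\nu^{-1}}}$ depends on the composition convention in $\aut V$, but as you note this is immaterial since $\nu$ is an automorphism, and your observation that uniseriality is not needed is also correct.
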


Now we are able to state the false Theorem of \cite{Eick2007}.

\begin{beh} [{\cite[Theorem 23]{Eick2007}}] \label{conjtheorem23}
For every $n \gg 0$ the $\comp(R, T/T_n)$-module structure of $H^2(R,T/T_n)$ coincides with its $\comp(R, T)$-module structure, that is, for every compatible pair $(\beta, \epsilon) \in \comp(R, T/T_n)$ there exists a compatible pair $(\beta, \delta) \in \comp(R, T)$ inducing the same automorphism of $H^2(R, T/T_n)$.
\end{beh}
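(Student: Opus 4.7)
The natural approach is to lift compatible pairs from $T/T_n$ to $T$, and then to read off the coincidence of module structures from the explicit cocycle formula for the action.

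More precisely, given $(\beta, \epsilon) \in \comp(R, T/T_n)$, the plan is to construct a compatible pair $(\beta, \delta) \in \comp(R, T)$ whose reduction modulo $T_n$ induces $(\beta, \epsilon)$. Once such a $\delta$ is in hand, the explicit formula
$$
\gamma^{(\beta,\epsilon)}(g,h) \;=\; \gamma\bigl(g^{\beta^{-1}}, h^{\beta^{-1}}\bigr).\epsilon
$$
depends on $\epsilon$ only through its action on $T/T_n$, so $(\beta, \delta)$ and $(\beta, \epsilon)$ induce the same automorphism of $H^2(R, T/T_n)$, and the two $\comp$-module structures on $H^2(R, T/T_n)$ coincide.

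The construction of $\delta$ would proceed in two layers. Since $T \cong \mathbb{Z}_p^d$ and $\gl{d}{\mathbb{Z}_p} \twoheadrightarrow \gl{d}{\mathbb{Z}_p/p^N}$ is surjective for every $N$, a set-theoretic lift $\delta_0 \in \aut T$ of $\epsilon$ is immediate. The serious step is to correct $\delta_0$ so that the semilinearity identity $\overline{g^\beta} \circ \delta = \delta \circ \overline{g}$ holds for all $g \in R$ on the nose, not merely modulo $T_n$. Writing $\delta := \delta_0(\id + \eta)$ with $\eta \in \hom(T, T_n)$, the failure map $g \mapsto \overline{g^\beta} \circ \delta_0 - \delta_0 \circ \overline{g} \in \hom(T, T_n)$ is a $1$-cocycle for $R$ acting on a twisted $\hom$-module, and a suitable $\eta$ exists precisely when this class is a coboundary.

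The main obstacle --- and, given the framing of this paper as a corrigendum, presumably the exact locus of the error in \cite{Eick2007} --- is the vanishing of this cohomological obstruction. There is no a priori reason for it to vanish for all $n$ and all compatible pairs, so one should expect compatible pairs on $T/T_n$ that are not the reduction of any compatible pair on $T$. Consequently the $\comp(R, T/T_n)$-module structure on $H^2(R, T/T_n)$ can be strictly richer than the $\comp(R, T)$-structure, and any rescue of Theorem \ref{thmbijorbits} (as carried out in Section \ref{secproofofthmbijorbits}) must avoid lifting individual compatible pairs and work at the level of orbits, showing that the relevant $\comp(R, T/T_n)$-orbits on $H^2(R, T/T_n)$ still coincide with $\comp(R, T)$-orbits on the appropriate summand, even though the full module actions genuinely differ.
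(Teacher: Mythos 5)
You were right to stop short of a proof: the statement is false, and in this paper it is deliberately demoted from a theorem of \cite{Eick2007} to Conjecture \ref{conjtheorem23} and refuted by an explicit counterexample in Section \ref{seccounterexample} (take $S = D_8 \ltimes \mathbb{Z}_2[i]$ acting uniserially, $T$ a suitable term of the lower central series, $R = S/T$; for all sufficiently large $n$ the subgroup $\endo_R (T/T_n) \rightarrow \comp(R,T/T_n)$, $\varphi \mapsto (1,\varphi)$, fails to stabilize the direct summand $H^2(R,T)$ of $H^2(R,T/T_n) \cong H^2(R,T) \oplus H^3(R,T_n)$, whereas every automorphism induced from $\comp(R,T)$ does stabilize it). So your diagnosis of the lifting step as the locus of the error is correct. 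Two refinements are worth recording. First, your obstruction argument only shows that $\epsilon$ need not lift to $\aut T$ compatibly with $\beta$; since the conjecture merely asks for \emph{some} $(\beta,\delta) \in \comp(R,T)$ inducing the same automorphism of $H^2(R,T/T_n)$, a genuine refutation needs the stronger fact that the induced automorphism itself cannot be matched by any such pair --- this is exactly what the summand-stabilization observation delivers. Second, the paper makes your cohomological obstruction precise and quantitative rather than merely heuristic: Theorem \ref{thmendop} shows that $\endo_R^{\beta}(T/T_n)$ splits as $(\endo_R^{\beta} T)_{A_n} \oplus E_n$ with $E_n \cong H^1(R_{t_0}, T_n)$, so the compatible pairs not induced from $T$ are parametrized by this $H^1$ of the point stabilizer $R_{t_0}$, and the repaired proof of Theorem \ref{thmbijorbits} in Section \ref{secproofofthmbijorbits} proceeds, just as you anticipated, at the level of orbits, via the semidirect decomposition $\Gamma_n/\im(\pi_n \circ \rho) \cong \Gamma/\im\rho \ltimes E_n$ and the equivariance statement of Theorem \ref{thmmumodulhom}, rather than by lifting individual compatible pairs.
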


A counterexample to Conjecture \ref{conjtheorem23} is given in Section \ref{seccounterexample}. First we give a corrected proof of Theorem \ref{thmbijorbits}.

\section{Proof of Theorem \ref{thmbijorbits}} \label{secproofofthmbijorbits}

As a first step we analyze the compatible pairs of $R$ and $T/T_n$. To ease the notation put
$$
A_n := T/T_n, \ \Gamma := \comp(R, T) \text{ and } \Gamma_n := \comp(R, A_n).
$$

The second component of each compatible pair $(\beta, \epsilon)$ is a homomorphisms of $R$-modules due to the following: For $\mathbb{Z}_p R$-modules $V$ and $W$ we write $\hom_R(V, W)$ to denote the group of $\mathbb{Z}_p R$-module homomorphisms $V \rightarrow W$, and we put $\endo_R V := \hom_R(V, V)$. Since $R$ acts on $T \cong \mathbb{Z}_p^{d}$, the group $T$ can be considered as $\mathbb{Z}_p R$-module. For an automorphism $\beta \in \aut R$ let $T_n^{(\beta)}$ denote the $R$-module, which equals $T_n$ as $\mathbb{Z}_p$-module and on which $R$ acts via $t*g := t.(g^\beta)$. Further, put $A_n^{(\beta)} := T^{(\beta)}/ T_n^{(\beta)}$, $\endo_R^{\beta} T := \hom_R( T, T^{(\beta)} )$ and $\endo_R^{\beta} A_n := \hom_R( A_n, A_n^{(\beta)} )$. Then we have
\begin{align}
  \Gamma          & = \{ (\beta, \epsilon) \in \aut R \times \aut T \mid \epsilon \in \endo_R^{\beta} T \} \text{ and} \label{eqcomp} \\
  \Gamma_n        & = \{ (\beta, \epsilon) \in \aut R \times \aut A_n \mid \epsilon \in \endo_R^{\beta} A_n \} \label{eqcompak}.  
\end{align}

In order to describe the compatible pairs it is evidently necessary to understand $R$-homomorphisms.

For an $R$-module $V$, an element $v \in V$ and a subgroup $H$ of $R$ let $R_v$ and $C_V(H)$ denote the stabilizer of $v$ in $R$ and the $\mathbb{Z}_p$-module of elements in $V$ fixed by $H$, respectively, that is,
$$
R_v := \stab_R(v) := \{ g \in R \mid v.g = v \} \text{ and } C_V(H) := \{ v \in V \mid v.g = v \text{ for } g \in H \}.
$$

\begin{lem} \label{lemendo}
Let $V$ and $W$ be $\mathbb{Z}_p$-modules, and assume that $R$ acts uniserially on $V$ with lower $R$-central series $V = V_0 \geq V_1 \geq V_2 \geq \cdots$, say. Further, let $v_0$ be an element of $V$ with $\langle v_0, V_1 \rangle = V$, and define $\eta: \hom_R( V, W ) \rightarrow W$, $\varphi \mapsto \varphi(v_0)$. Then $\eta$ is a $\mathbb{Z}_p$-module monomorphism with $\im \eta = C_W( R_{v_0})$. 
\end{lem}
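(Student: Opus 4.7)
My plan is to prove the lemma in four steps---(a) $v_0$ generates $V$ as an $R$-module, (b) $\eta$ is $\mathbb{Z}_p$-linear and injective, (c) the inclusion $\im\eta\subseteq C_W(R_{v_0})$, and (d) the reverse inclusion---with step (d) being the main obstacle.

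For (a), let $U:=\mathbb{Z}_pR\cdot v_0$ be the $R$-submodule generated by $v_0$. The hypothesis $\langle v_0,V_1\rangle=V$ reads $U+V_1=V$, and since $V_1=[R,V]\subseteq[R,U]+[R,V_1]\subseteq U+V_2$, iteration yields $V=U+V_n$ for every $n\geq 1$. The uniserial action makes the lower $R$-central series collapse quickly: either $V_n=0$ for some $n$ (if $V$ has finite length), or, in the free case of rank $d$, $V_{nd}=p^nV$. In either case Nakayama's lemma for the finitely generated $\mathbb{Z}_p$-module $V/U$ then yields $V=U$. Steps (b) and (c) follow at once. The $\mathbb{Z}_p$-linearity of $\eta$ is clear; if $\varphi(v_0)=0$ then $\varphi(v_0.r)=\varphi(v_0).r=0$ for every $r\in\mathbb{Z}_pR$, so $\varphi\equiv 0$ on $V=\mathbb{Z}_pR\cdot v_0$ by (a); and if $g\in R_{v_0}$, then $\varphi(v_0).g=\varphi(v_0.g)=\varphi(v_0)$ places $\varphi(v_0)$ in $C_W(R_{v_0})$.

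The substantial part is (d). Given $w\in C_W(R_{v_0})$, I would aim to produce $\varphi\in\hom_R(V,W)$ with $\varphi(v_0)=w$ by taking the $R$-linear map $\tilde\varphi:\mathbb{Z}_pR\to W$, $r\mapsto w.r$, and verifying that it descends through the surjection $\pi:\mathbb{Z}_pR\to V$, $r\mapsto v_0.r$, provided by (a). This reduces to showing that $\ker\pi$ annihilates $w$; since $w$ is already killed by $g-1$ for each $g\in R_{v_0}$, the task amounts to identifying $\ker\pi$ as the right ideal generated by $\{g-1\mid g\in R_{v_0}\}$. This identification is where the uniserial hypothesis on $V$ must enter essentially, and I expect it to be the hardest point: my plan is an induction on the length of $V$, using the short exact sequence $0\to V_1\to V\to V/V_1\to 0$, the fact that $V/V_1$ has order $p$ with trivial $R$-action, and the uniserial $R$-module structure of $V_1$ (which is itself cyclic as $R$-module, with generator of the form $v_0.(g_0-1)$ for a suitable $g_0\in R\setminus R_{v_0}$) to reduce to a smaller instance and assemble the relations in $\ker\pi$ one filtration layer at a time.
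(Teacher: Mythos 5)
Your steps (a)--(c) are sound and essentially reproduce the paper's argument: the paper gets injectivity by observing that the images of elements $v_i \in [R,\langle v_0,\dots,v_{i-1}\rangle]$ generating the successive layers of the filtration are forced by $\varphi(v_0)$ (a hands-on version of your Nakayama argument), and the inclusion $\im \eta \subseteq C_W(R_{v_0})$ is proved identically. The problem is step (d). The identification on which your plan rests --- that $\ker \pi$ is the right ideal $I$ generated by $\{g-1 \mid g \in R_{v_0}\}$ --- is false, and no induction along the uniserial filtration can rescue it, because it already fails for the module featured in Section~\ref{seccounterexample}: take $p=2$, $R = D_8 = \langle a,b\rangle$ and $V = \mathbb{Z}_2[i]$ with $b$ acting as multiplication by $i$ and $a$ as complex conjugation, and $v_0 = 1$. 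Then $R_{v_0} = \langle a\rangle$ has index $4$, so $\mathbb{Z}_2R/I \cong \mathbb{Z}_2[\langle a\rangle\backslash R]$ is free of $\mathbb{Z}_2$-rank $4$, whereas $V$ has rank $2$; concretely $1+b^2 \in \ker\pi$ (since $1.b^2 = -1$) while $1+b^2 \notin I$.

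Moreover, your reduction is faithful: asking that $\im\eta = C_W(R_{v_0})$ for \emph{every} $\mathbb{Z}_pR$-module $W$ is equivalent to $\ker\pi = I$ (test on $W = \mathbb{Z}_pR/I$). So the failure above is not a defect of your route so much as of the statement in this generality: in the same example, with $W = \mathbb{Z}_2$ carrying the trivial $R$-action, one has $C_W(R_{v_0}) = \mathbb{Z}_2$, yet every $\varphi \in \hom_R(V,W)$ satisfies $\varphi(1) = \varphi(1.b^2) = \varphi(-1) = -\varphi(1)$, whence $\varphi(v_0) = 0$ and $\im\eta = 0$. You have in fact put your finger on the weak point of the paper's own proof, which defines $\varphi(v_0.\sum_g a_g g) := \sum_g a_g\, w.g$ and checks well-definedness only against relations of the form $v_0.g = v_0.g'$ with $g,g' \in R$, ignoring $\mathbb{Z}_p$-linear relations in $\ker\pi$ such as $v_0.(1+b^2)=0$. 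The lemma is only ever applied with $W$ equal to a twist $V^{(\beta)}$ of $V$ (or a corresponding quotient), where those extra relations turn out to be harmless; the honest general statement your method proves is $\im\eta = \{w \in W \mid w.x = 0 \text{ for all } x \in \ker\pi\}$, and the remaining work is to show that this set coincides with $C_W(R_{v_0})$ for the particular $W$ at hand --- not for arbitrary $W$, where it is simply not true.
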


\begin{proof}
Let $f$ denote the rank of $V$. Since $R$ acts uniserially on $V$, there exist elements $v_1, \ldots, v_{f-1}$ of $V$ such that $v_i \in [R, \langle v_0, \ldots, v_{i-1} \rangle]$ and $\langle v_i, V_{i+1} \rangle = V_i$ for $1 \leq i \leq f-1$. Note that the image of $v_i$ under an $R$-homomorphism $\varphi$ is uniquely determined by $\varphi(v_0)$. By the uniserial action of $R$ on $V$, we have $\bigoplus_{i=0}^{f-1} \mathbb{Z}_p v_i = V$. Hence $\eta$ is a $\mathbb{Z}_p$-module monomorphism. It remains to show that $\im \eta = C_W( R_{v_0} )$. Let $\varphi \in \hom_R(V, W)$. Then for every $g \in R_{v_0}$ we have $\varphi(v_0).g = \varphi(v_0.g) = \varphi(v_0)$ and thus the element $\varphi(v_0) = \eta(\varphi)$ lies in $C_W( R_{v_0} )$. Conversely, let $w$ be an element of $C_W( R_{v_0} )$ and define $\varphi: V \rightarrow W$ via $\varphi(v_0) = w$ and $\varphi( v_0.\sum_{g\in R} a_g g) := \sum_{g \in R} a_g w.g$ for $\sum_{g\in R} a_g g \in \mathbb{Z}_p R$. We show that $\varphi$ is well-defined. Let $g, g' \in R$ be with $v_0.g = v_0.g'$, in particular $g' g^{-1} \in R_{v_0}$. Since $w \in C_W( R_{v_0} )$, we have $w.g'g^{-1} = w$ and thus $\varphi( v_0. g') = w.g' = (w.g'g^{-1}).g = w.g = \varphi( v_0.g)$. Thus $\varphi$ is well-defined and the result follows.
\end{proof} 

Let $t_0$ be an element of $T$ with $\langle t_0, T_1 \rangle = T$. For $\beta \in \aut R$ Lemma \ref{lemendo} yields $\endo_R^{\beta} A_n \cong C_{A_n^{\beta}}( R_{t_0 + T_n} )$. Recall that the $0$-th cohomology group $H^0( R_{t_0 + T_n}, A_n^{\beta} )$ is defined as $C_{A_n^{\beta}}( R_{t_0 + T_n} )$. Generalizing  \cite[Theorems 16 \& 18]{Eick2007} we show in Section~\ref{subseccohomtfree} that $H^0(R_{t_0 + T_n}, A_n^{(\beta)})$ is isomorphic to the direct sum of $H^1(R_{t_0 + T_n}, T_n^{(\beta)})$ and a quotient of $H^0(R_{t_0 + T_n}, T^{(\beta)})$ for every sufficiently large $n$.

\subsection{Cohomology groups of quotients of torsion-free modules} \label{subseccohomtfree}
Let $G$ and $\mathbb{Z}_p$ denote a finite group and the $p$-adic integers, respectively. Further, let
$$
0 \rightarrow A \stackrel{\iota}{\longrightarrow} B \stackrel{\vartheta}{\longrightarrow} B/A \rightarrow 0
$$
be a short exact sequence of $\mathbb{Z}_p G$-modules, where $\iota$ is an embedding $A \hookrightarrow B$ and $\vartheta$ is the natural homomorphism $B \twoheadrightarrow B/A$. This section deals with the decomposition of the $m$-th cohomology group $H^m(G, B/A)$ into a direct sum of $H^m(G,B)$ and $H^{m+1}(G,A)$ for $m \in \mathbb{N}_0$ and suitable modules $A$ and $B$.

The sequence $H^m(G, A) \rightarrow H^m(G, B) \rightarrow H^m(G, B/A)$ induced by $\iota$ and $\vartheta$ is exact by the Snake Lemma. Further, the Snake Lemma yields the existence of a connecting homomorphism $\delta_m: H^m(G, B/A) \rightarrow H^{m+1}(G, A) $ making the sequence
{\footnotesize
\begin{align} \label{escohoms}
\xymatrix@C=1.3em{
H^m(G, A) \ar[r] & H^m(G, B) \ar[r] & H^m(G, B/A) \ar[r]^{\delta_m} & H^{m+1}(G, A) \ar[r] & H^{m+1}(G, B) \ar[r] & H^{m+1}(G, B/A)
}
\end{align}
}exact. The Exact Sequences \eqref{escohoms} can be combined to the well-known long exact sequence 
{\small
\begin{align*}
{
0 \rightarrow H^0(G, A) \rightarrow \cdots \rightarrow H^m(G, A) \rightarrow H^m(G, B) \rightarrow H^m(G,B/A) \rightarrow H^{m+1}(G, A) \rightarrow \cdots .
}
\end{align*}}

For further details we refer to \cite[page 197]{Leedham-Green2002}. Now, we fix $m \in \mathbb{N}_0$ and we put
$$
e := \exp H^m(G, B) \text{ and } f := \exp H^{m+1}(G, A),
$$
where $\exp U$ denotes the exponent of a group $U$. The following lemma generalizes \cite[Theorem 16]{Eick2007} and its proof follows the original proof.

\begin{lem} \label{lemescohoms}
Assume that $B$ is a free $\mathbb{Z}_p$-module of finite rank and $A$ is a submodule of $f \cdot B$. Then the Exact Sequence \eqref{escohoms} gives rise to an exact sequence
\begin{align} \label{escohom}
 H^m(G, B) \longrightarrow H^m(G, B/A) \longrightarrow H^{m+1}(G, A) \rightarrow 0.
\end{align}
If additionally $A \leq e \cdot B$, then the Exact Sequence \eqref{escohoms} leads to a short exact sequence
\begin{align} \label{sescohom}
 0 \rightarrow H^m(G, B) \longrightarrow H^m(G, B/A) \longrightarrow H^{m+1}(G, A) \rightarrow 0.
\end{align}
\end{lem}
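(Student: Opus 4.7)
My plan is to exploit the freeness (hence torsion-freeness) of $B$ in order to factor the embedding $\iota:A\hookrightarrow B$ through multiplication by a suitable integer, and then read off the vanishing of certain maps in the long exact sequence~\eqref{escohoms}. Under the hypothesis $A\subseteq f\cdot B$, for every $a\in A$ there is a \emph{unique} $b\in B$ with $f\cdot b=\iota(a)$; setting $\iota'(a):=b$ gives a well-defined map $\iota':A\to B$, and this map is $\mathbb{Z}_pG$-linear because multiplication by $f$ is $\mathbb{Z}_p$-linear and commutes with the $G$-action on $B$. By construction $\iota=f\cdot\iota'$.

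Applying cohomology in degree $m+1$, the induced homomorphism factors as $\iota_{*}=f\cdot\iota'_{*}=\iota'_{*}\circ(f\cdot-)$. Since $f=\exp H^{m+1}(G,A)$, multiplication by $f$ on $H^{m+1}(G,A)$ is the zero map, hence $\iota_{*}:H^{m+1}(G,A)\to H^{m+1}(G,B)$ vanishes. Exactness of~\eqref{escohoms} at $H^{m+1}(G,A)$ then forces the connecting homomorphism $\delta_m$ to be surjective; together with exactness at $H^m(G,B/A)$ this yields exactly the sequence~\eqref{escohom}.

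For the short exact sequence~\eqref{sescohom} I repeat the argument at the left end of~\eqref{escohoms} with $e$ in place of $f$. The extra hypothesis $A\subseteq e\cdot B$ produces a $\mathbb{Z}_pG$-homomorphism $\iota'':A\to B$ with $\iota=e\cdot\iota''$, and the induced map in degree $m$ satisfies $\iota_{*}(\alpha)=e\cdot\iota''_{*}(\alpha)\in H^m(G,B)$, which is annihilated by $e=\exp H^m(G,B)$. Hence $H^m(G,A)\to H^m(G,B)$ is zero, so exactness of~\eqref{escohoms} makes $H^m(G,B)\to H^m(G,B/A)$ injective, completing~\eqref{sescohom}.

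I anticipate no serious obstacle: the whole lemma reduces to the elementary observation that whenever $A$ sits inside $q\cdot B$ for an integer $q$ and $B$ is torsion-free, the inclusion $\iota$ is divisible by $q$ as a $\mathbb{Z}_pG$-module map, so the maps it induces on cohomology are multiples of $q$ on one side and can be killed on the appropriate end by the corresponding exponent. The only point to verify is that $\iota'$ and $\iota''$ are well-defined and $G$-equivariant, which follows directly from $B$ being a free $\mathbb{Z}_p$-module and from $q\in\mathbb{Z}$ being central in $\endo_{\mathbb{Z}_pG}(B)$.
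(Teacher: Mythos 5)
Your proof is correct and takes essentially the same route as the paper: both arguments use torsion-freeness of $B$ to factor the inclusion $A\hookrightarrow B$ as multiplication by $f$ (resp.\ $e$) composed with a $\mathbb{Z}_p G$-homomorphism into $B$, and then conclude that the induced maps $H^{m+1}(G,A)\to H^{m+1}(G,B)$ and $H^m(G,A)\to H^m(G,B)$ vanish because $f$ and $e$ are the exponents of the relevant cohomology groups. The paper phrases the first step via the submodule $D=\{b\in B\mid f\cdot b\in A\}$ with $f\cdot D=A\cong D$, which is precisely your $\iota'$ in different notation.
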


\begin{proof}
For $l \in \mathbb{N}_0$ let $\iota_l$ denote the group homomorphism $\iota_l: H^l(G, A) \rightarrow H^l(G, B)$ induced by $A \hookrightarrow B$. In order to prove Lemma \ref{lemescohoms} it suffices to show that $\iota_{m+1}$ and $\iota_m$ are trivial. Put $D := \{ b \in B \mid f\cdot b \in A \}$ and note that $D$ is a $\mathbb{Z}_p G$-submodule of $B$, and $f \cdot D$ equals $A \leq f\cdot B$. Thus $\iota_{m+1}$ decomposes into ${H^{m+1}( G, f\cdot D )} \rightarrow H^{m+1}(G, D) \rightarrow H^{m+1}(G, B)$, where the homomorphisms are induced by embeddings. Since $D$ is torsion-free, the $\mathbb{Z}_p G$-module $D$ is isomorphic to $f \cdot D = A$ and the exponent of $H^{m+1}(G, D)$ equals $f = \exp H^{m+1}(G, A)$. It follows that the group homomorphism $H^{m+1}(G, f \cdot D) \rightarrow H^{m+1}(G, D)$ is trivial. Hence $\iota_{m+1}$ is trivial and this leads to the Exact Sequence \eqref{escohoms}. 

Now assume that $A$ is also a submodule of $e \cdot B$. Then $\iota_m$ is the composition $H^m(G, A) \rightarrow H^m(G, e B) \rightarrow H^m(G, B)$. As $H^n(G, e B) \rightarrow H^n(G, B)$ is trivial, the result follows.
\end{proof}

Recall that $\mathbb{Z}_p$ denotes the $p$-adic integers. The following Lemma \ref{lemsescohoms} is a generalized result of Eick \& Leedham-Green \cite[Theorem 18]{Eick2007}. For the proof of Lemma \ref{lemsescohoms} we have chosen another approach enabling us to give sharper bounds. % $e$ and $f$.
% If additionally $A$ is finitely generated as module, then the sequences of Lemma \ref{lemescohoms} split:

\begin{lem} \label{lemsescohoms}
Assume that $B$ is a free $\mathbb{Z}_p$-module of finite rank and $A$ is a submodule of $f \cdot B$ and let $\vartheta_m: H^m(G, B) \rightarrow H^m(G, B/A)$ be as in the Exact Sequence \eqref{escohom}.
\begin{enums}
\item The Exact Sequence \eqref{escohom} gives rise to a split short exact sequence
\begin{align*} 
 0 \rightarrow \im \vartheta_m \longrightarrow H^m(G, B/A) \longrightarrow H^{m+1}(G, A) \rightarrow 0.
\end{align*}
\item If additionally $A \leq e \cdot B$, then the Short Exact Sequence \eqref{sescohom} splits.
\end{enums}
\end{lem}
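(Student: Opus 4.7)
My plan is to prove part (i) by producing an explicit section of $\delta_m\colon H^m(G,B/A)\to H^{m+1}(G,A)$; part (ii) then follows because its short exact sequence is the case of (i) in which $\vartheta_m$ is injective. The key auxiliary object, already appearing in the proof of Lemma~\ref{lemescohoms}, is the $\mathbb{Z}_p G$-submodule
$$
D := \{\, b \in B \mid f\cdot b \in A\,\},
$$
which satisfies $A \subseteq D$ and $fD = A$. Since $B$ is torsion-free, so is $D$, and multiplication by $f$ is a $\mathbb{Z}_p G$-isomorphism $D \xrightarrow{\sim} A$.

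The inclusion $D \hookrightarrow B$ gives a morphism of short exact sequences
$$
\begin{array}{ccccccccc}
0 & \to & A & \to & D & \to & D/A & \to & 0 \\
  & & \parallel & & \cap & & \downarrow \phi & & \\
0 & \to & A & \to & B & \to & B/A & \to & 0,
\end{array}
$$
and naturality of long exact cohomology yields $\delta_m \circ \phi_* = \delta^D_m$, where $\delta^D_m\colon H^m(G,D/A) \to H^{m+1}(G,A)$ is the connecting homomorphism of the top row. Under the isomorphism $D \cong A$ the top row is identified with $0 \to A \xrightarrow{f} A \to A/fA \to 0$, whose long exact cohomology sequence, together with $f = \exp H^{m+1}(G,A)$, yields the short exact sequence
$$
0 \longrightarrow H^m(G,A)/f\,H^m(G,A) \longrightarrow H^m(G,A/fA) \stackrel{\delta^D_m}{\longrightarrow} H^{m+1}(G,A) \longrightarrow 0. \qquad (\star)
$$

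The main step is to split $(\star)$. Computing cohomology via a free $\mathbb{Z}_p G$-resolution $P_\bullet$ of $\mathbb{Z}_p$, the cochain complex $C^\bullet := \hom_G(P_\bullet, A)$ consists of free $\mathbb{Z}_p$-modules (since $A$, as a submodule of the free $\mathbb{Z}_p$-module $B$ of finite rank, is itself free over the PID $\mathbb{Z}_p$). The short exact sequence $0 \to C^\bullet \xrightarrow{f} C^\bullet \to C^\bullet/fC^\bullet \to 0$ is the mod-$f$ reduction of $C^\bullet$, whose long exact cohomology sequence recovers $(\star)$. The classical universal coefficient theorem for mod-$f$ reduction of a cochain complex of free modules over the PID $\mathbb{Z}_p$ then provides a (non-canonical) splitting $s^D\colon H^{m+1}(G,A) \to H^m(G,A/fA) \cong H^m(G,D/A)$ of $\delta^D_m$. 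I expect this to be the main obstacle: it rests on the standard decomposition $C^n = Z^n \oplus W^n$ with $d|_{W^n}\colon W^n \xrightarrow{\sim} B^{n+1}$ and on tracking which direct summands survive reduction modulo~$f$.

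Finally, setting $s := \phi_* \circ s^D$ gives $\delta_m \circ s = \delta^D_m \circ s^D = \id$, so $s$ is a section of $\delta_m$ and exhibits the sequence of part~(i) as $H^m(G,B/A) = \im \vartheta_m \oplus \im s$. For part~(ii), the additional hypothesis $A \leq eB$ makes $\vartheta_m$ injective by Lemma~\ref{lemescohoms}(ii), so $\im \vartheta_m = H^m(G,B)$ and the same section $s$ splits the Short Exact Sequence~\eqref{sescohom}.
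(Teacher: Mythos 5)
Your proof is correct, and it takes a genuinely different (though morally equivalent) route from the paper's. The paper works at the level of cocycles: it first shows that the splitting of \eqref{sescohom} is equivalent to the splitting of the cocycle-level sequence $0 \to Z^m(G,B) \to Z^m(G,B/A) \to H^{m+1}(G,A)\to 0$, and then constructs an explicit complement $K \leq Z^m(G, D/fD)$ of $\vartheta_m(Z^m(G,D))$ by putting the coboundary map $d^m\colon C^m(G,D)\to C^{m+1}(G,D)$ into Smith normal form over $\mathbb{Z}_p$. You instead stay at the level of cohomology: naturality of the connecting homomorphism along the morphism of extensions $(A\to D\to D/A)\Rightarrow(A\to B\to B/A)$ reduces the problem to splitting the Bockstein sequence of $0\to A\xrightarrow{f}A\to A/fA\to 0$, which the universal coefficient theorem over the PID $\mathbb{Z}_p$ handles (using that $A$ is $\mathbb{Z}_p$-free and $f$ kills $H^{m+1}(G,A)$, so the Tor term is all of $H^{m+1}(G,A)$). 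The technical core is the same -- the UCT splitting over a PID is proved exactly by the kind of $Z^n\oplus W^n$ decomposition the paper implements by hand via Smith normal form -- but your version buys a cleaner argument that avoids the cocycle-level bookkeeping and the (slightly delicate) equivalence between splitting \eqref{sescohom} and splitting \eqref{sescocycs}, at the cost of invoking naturality of $\delta_m$ and the standard identification of the UCT surjection with the connecting map; the paper's hands-on complement $K$ is, by contrast, completely explicit. Your observations that $A\subseteq D$, that $fD=A$ with $D\cong A$ as $\mathbb{Z}_pG$-modules, and that part (ii) is the special case where $\vartheta_m$ is injective (by Lemma \ref{lemescohoms}) are all correct.
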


\begin{proof}
By assumption there exists a $\mathbb{Z}_p G$-submodule $D \leq B$ such that $f \cdot D = A$. Since $G$ is finite and $A = f\cdot D \cong D$ is finitely generated, the groups of cochain maps $C^m(G, D)$ and $C^{m+1}(G, D)$ are finitely generated $\mathbb{Z}_p$-modules. It follows that $H^{m+1}(G, A) \cong H^{m+1}(G, D)$ is finite. 

Let $\vartheta_m$ denote the group homomorphism $Z^m(G, B) \rightarrow Z^m(G, B/A)$ induced by the natural homomorphism $B \twoheadrightarrow B/A$. It is straightforward to show that $B^2(G, B/A)$ is a subgroup of $\im \vartheta_m$. Hence Lemma \ref{lemescohoms} yields that there exists a short exact sequence
\begin{align} \label{sescocycs}
 0 \rightarrow Z^m(G, B) \stackrel{\vartheta_m}{\longrightarrow} Z^m(G, B/A) \longrightarrow H^{m+1}(G, A) \rightarrow 0
\end{align}
and the Short Exact Sequence \eqref{sescohom} splits if and only if this is the case for the Short Exact Sequence \eqref{sescocycs}. Since $H^{m+1}(G, B/A)$ is finite, this is equivalent to the existence of a submodule $K \leq Z^m(G,B/A)$ with $K \cap \im \vartheta_m = 0$ and $K \cong H^{m+1}(G, A)$. Recall that $Z^m(G, D)$ is the kernel of the coboundary map $d^m: C^m(G, D) \rightarrow C^{m+1}(G, D)$, which is a homomorphism of finitely generated $\mathbb{Z}_p$-modules. Let $\nu_p$ denote the $p$-adic valuation. By the existence of the Smith normal form of the coboundary map there are a $\mathbb{Z}_p$-module basis of $C^m(G, D)$, say $a_1, \ldots, a_n$, and corresponding $p$-adic integers, say $b_1, \ldots, b_n$, such that the module $Z^m(G, D/f \cdot D) = \{ \delta \in C^m(G, D) \mid d^m \delta \in C^{m+1}(G, f \cdot D) \}$ equals 
$$
\bigoplus_{\substack{1 \leq i \leq n, \\ \nu_p(b_i) < \nu_p(f)}} \mathbb{Z}_p \cdot f/b_i \cdot a_i \oplus \bigoplus_{\substack{1 \leq i \leq n, \\ \nu_p(b_i) \geq \nu_p(f)}} \mathbb{Z}_p \cdot a_i .
$$
Note that $\vartheta_m( Z^m(G, D) )$ equals $\bigoplus_{1 \leq i \leq n, b_i = 0} \mathbb{Z}_p \cdot a_i$, and hence there exists a $\mathbb{Z}_p$-submodule $K$ of $Z^m(G, D/ f \cdot D)$ such that $Z^m(G, D/f \cdot D)$ is the direct sum $\vartheta_m( Z^m(G, D) ) \oplus K$. By applying Lemma~\ref{lemescohoms} to $Z^m(G, D/ f\cdot D)$ we obtain that $K \cong H^{m+1}(G, f \cdot D) = H^{m+1}(G, A)$. Further, we have $\im \vartheta_m \cap K = \vartheta_m( Z^m(G, D)) \cap K = 0$. The result follows. 
\end{proof}

\subsection{$R$-Homomorphisms}

In this subsection we apply the results of the previous subsection on $H^0(R_{t_0+T_n}, A_n)$, which is isomorphic to $\endo_R A_n$ by Lemma \ref{lemendo}.

For a homomorphism $\epsilon: T \rightarrow T$ with $\epsilon( T_n ) \subseteq T_n$ let $\epsilon_{A_n}: A_n \rightarrow A_n$ be defined as the homomorphism induced by $\epsilon$ and the natural homomorphism $T \twoheadrightarrow A_n$. Further, for a set $M$ of homomorphisms $\epsilon: T \rightarrow T$ with $\epsilon( T_n ) \subseteq T_n$ we write $M_{A_n}$ to denote the set $\{ \epsilon_{A_n} \mid \epsilon \in M\}$.

\begin{thm} \label{thmendop}
Assume that $n \geq \log_p( \exp H^1(R_{t_0}, T_n)) \cdot d$. Then for $\beta \in \aut R$ there exists a complement of $(\endo_R^{\beta} T)_{A_n}$ in $\endo_R^{\beta} A_n$ which is isomorphic to $H^1(R_{t_0}, T_n)$.
\end{thm}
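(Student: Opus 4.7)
The plan is to recast the claim cohomologically and then invoke Lemma \ref{lemsescohoms}(i).

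By Lemma \ref{lemendo}, evaluation at $t_0$ and at $t_0+T_n$ gives $\mathbb{Z}_p$-linear isomorphisms
$$
\endo_R^{\beta} T \;\cong\; H^0(R_{t_0}, T^{(\beta)}), \qquad \endo_R^{\beta} A_n \;\cong\; H^0(R_{t_0+T_n}, A_n^{(\beta)}).
$$
Any $\epsilon \in \endo_R^{\beta} T$ preserves every term of the lower $R$-central series (this series being intrinsic to the $R$-module structure, and unchanged by the $\beta$-twist on the target), so $\epsilon_{A_n}$ is well defined, and under the identifications above the assignment $\epsilon \mapsto \epsilon_{A_n}$ corresponds exactly to the natural map $\vartheta_0$ induced by the quotient $T^{(\beta)} \twoheadrightarrow A_n^{(\beta)}$.

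Next I would show that the hypothesis on $n$ forces $R_{t_0+T_n} = R_{t_0}$. The stabilisers $(R_{t_0+T_n})_n$ form a decreasing chain of subgroups of the finite group $R$, so they are eventually constant; and since $T_{n+d} = p\cdot T_n$ yields $\bigcap_n T_n = 0$, the eventual value must be $R_{t_0}$. The arithmetic bound in the assumption is easily strong enough for this stabilisation to have occurred, so we may set $H := R_{t_0}$ and work with $H$ on both sides.

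Then apply Lemma \ref{lemsescohoms}(i) to the short exact sequence of $\mathbb{Z}_pH$-modules $0 \to T_n^{(\beta)} \to T^{(\beta)} \to A_n^{(\beta)} \to 0$, with $m=0$. Here $B = T^{(\beta)}$ is $\mathbb{Z}_p$-free of rank $d$, and the needed inclusion $T_n^{(\beta)} \subseteq f \cdot T^{(\beta)}$ with $f := \exp H^1(H, T_n^{(\beta)})$ follows from the uniserial relation $T_{n+d} = p\cdot T_n$ (which gives $T_n \subseteq p^{\lfloor n/d \rfloor}\, T$) combined with the hypothesis $n \geq \log_p(\exp H^1(R_{t_0}, T_n)) \cdot d$ (which gives $\lfloor n/d\rfloor \geq \nu_p(f)$). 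The lemma then produces a split short exact sequence
$$
0 \longrightarrow \im \vartheta_0 \longrightarrow H^0(H, A_n^{(\beta)}) \longrightarrow H^1(H, T_n^{(\beta)}) \longrightarrow 0,
$$
and transporting the splitting back through Lemma \ref{lemendo} yields a complement of $(\endo_R^{\beta} T)_{A_n}$ inside $\endo_R^{\beta} A_n$ isomorphic to $H^1(H, T_n^{(\beta)}) \cong H^1(R_{t_0}, T_n)$; the last identification is an abstract isomorphism of abelian groups, since twisting the module structure on $T_n$ by the automorphism $\beta \in \aut R$ does not alter the cohomology up to isomorphism (it merely pulls the $R$-action back along $\beta$).

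The main technical obstacle is ensuring that the numerical bound in the hypothesis does the double duty required of it: it must simultaneously guarantee the stabiliser equality $R_{t_0+T_n} = R_{t_0}$ and the exponent inclusion $T_n^{(\beta)} \subseteq f\cdot T^{(\beta)}$ needed to trigger Lemma \ref{lemsescohoms}(i); everything else is a careful translation between the $\hom_R$-language of the statement and the $H^0$-language in which Lemma \ref{lemsescohoms} is phrased.
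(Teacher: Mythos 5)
Your overall strategy is the same as the paper's: use Lemma \ref{lemendo} to convert the two Hom-groups into $H^0$'s, use the uniserial relation $T_{kd}=p^kT$ together with the numerical hypothesis to check $T_n^{(\beta)}\leq f\cdot T^{(\beta)}$, and then invoke the splitting from Lemma \ref{lemsescohoms}(i); the identification of $\epsilon\mapsto\epsilon_{A_n}$ with $\vartheta_0$ and the final isomorphism $H^1(R_{t_0},T_n^{(\beta)})\cong H^1(R_{t_0},T_n)$ also match the paper.

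There is, however, a genuine gap where you reconcile the two stabilizers. Lemma \ref{lemendo} identifies $\endo_R^{\beta}T$ with $H^0(R_{t_0},T^{(\beta)})$ but $\endo_R^{\beta}A_n$ with $H^0(R_{t_0+T_n},A_n^{(\beta)})$, and Lemma \ref{lemsescohoms} must be applied with a single fixed group $G$. You assert that the hypothesis $n\geq\log_p(\exp H^1(R_{t_0},T_n))\cdot d$ is \emph{easily strong enough} to force $R_{t_0+T_n}=R_{t_0}$. This is unjustified, and it is not true in general: since $T_{n+d}=pT_n$, the quantity $\exp H^1(R_{t_0},T_n)$ depends only on $n\bmod d$ and is bounded (by $|R_{t_0}|$), so the hypothesis only requires $n$ to exceed a constant of size at most $\log_p|R_{t_0}|\cdot d$; by contrast, the index at which the decreasing chain $R_{t_0+T_n}$ reaches $R_{t_0}$ is governed by how deep the elements $t_0.g-t_0$ with $g\notin R_{t_0}$ lie in the filtration $(T_j)$, which is a priori unrelated to that exponent. (Note for instance that $R_{t_0+T_1}=R$ always, since $t_0.g-t_0\in[R,T]=T_1$ for every $g\in R$.) The paper never claims the two stabilizers coincide; instead, writing $Q=R_{t_0+T_n}$ and $P=R_{t_0}$, it chooses $m$ with $R_{t_0+T_{n+md}}=P$ and uses the $R$-module isomorphism $T\rightarrow T_{md}$, $t\mapsto p^mt$, to embed $\endo_R^{\beta}A_n$ into $\endo_R^{\beta}A_{n+md}$ with image $\hom_R(A_{n+md},p^m\cdot A_{n+md}^{(\beta)})$, whence $H^0(Q,A_n^{(\beta)})\cong H^0(P,p^m\cdot A_{n+md}^{(\beta)})\cong H^0(P,A_n^{(\beta)})$. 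Some such shifting argument is needed in your proof; without it, the splitting you obtain from Lemma \ref{lemsescohoms} lives in the wrong group.
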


\begin{proof}
Let $\vartheta$ denote the natural homomorphism $T \twoheadrightarrow A_n$, and put $P := R_{t_0}$. First we show that $H^1( P, T_n^{(\beta)}) \cong H^1(P, T_n)$ and $H^0(P, A_n^{(\beta)}) \cong \vartheta( H^0(P, T^{(\beta)}) ) \oplus H^1(P, T_n^{(\beta)})$. Let $\alpha: Z^1( P, T_n^{(\beta)} ) \rightarrow Z^1(P, T_n)$ be defined by $\alpha(\psi)(g) := \psi( g^{\beta^{-1}} )$ for $\psi \in Z^1(P, T_n^{(\beta)})$ and $g \in P$. It is straightforward to check that $\alpha$ is a well-defined group isomorphism and that the image of $B^1(P, T_n^{(\beta)})$ is $B^1(P, T_n)$. It follows that $H^1(P, T_n)$ is isomorphic to $H^1(P, T_n^{(\beta)})$. Hence $\exp H^1(P, T_n)$ equals $\exp H^1(P, T_n^{(\beta)})$, and we have $p^{n/d} \geq \exp H^1(P, T_n^{(\beta)})$ by assumption. In particular $T_n^{(\beta)}$ is a $R$-submodule of $\exp H^1(P, T_n^{(\beta)}) \cdot T^{(\beta)}$ by the uniserial action of $R$ on $T^{(\beta)}$. Then Lemma~\ref{lemsescohoms} yields $H^0(P, A_n^{(\beta)}) \cong \vartheta( H^0(P, T^{(\beta)}) ) \oplus H^1(P, T_n^{(\beta)})$.

Put $Q := R_{t_0 + T_n}$. Next we show that $H^0(Q, A_n^{(\beta)})$ equals $H^0(P, A_n^{(\beta)})$. Since $R$ is finite, there exists a natural number $m$ such that $R_{t_0 + T_{n+md}}$ equals $P = R_{t_0}$. By Lemma \ref{lemendo} we have $\endo_R^{\beta} A_n \cong H^0(Q, A_n^{(\beta)})$ and $\endo_R^{\beta} A_{n+md} \cong H^0(P, A_{n+md}^{(\beta)})$. Note that the $R$-module isomorphism $T \rightarrow T_{md}$, $t \mapsto p^m t$ induces a natural embedding of the group $\endo_R^{\beta} A_{n}$ into $\endo_R^{\beta} A_{n+md}$, whose image is $\hom_R( A_{n+md}, p^m \cdot A_{n+md}^{(\beta)})$. This yields $H^0(Q, A_n^{(\beta)}) \cong H^0(P, p^m \cdot A_{n+md}^{(\beta)})$. As $H^0(P, p^m \cdot A_{n+md}^{(\beta)})$ is canonically isomorphic to $H^0(P, A_{n}^{(\beta)})$, it follows that $H^0(Q, A_n^{(\beta)}) \cong H^0(P, A_n^{(\beta)})$. In particular we have $\endo^{\beta}_R A_n \cong H^0(P, A_n^{(\beta)})$.

Evidently, the image of $(\endo_R T)_{A_n}$ under the isomorphism $\endo_R^{\beta} A_n \rightarrow H^0(P, A_n^{(\beta)})$ of Lemma \ref{lemendo} is $\vartheta( H^0(P, T^{(\beta)}) )$. Recall that $H^0(P, A_n^{(\beta)})$ is isomorphic to the direct sum $\vartheta( H^0(P, T^{(\beta)}) ) \oplus H^1(P, T_n^{(\beta)})$. Hence there exists a complement of $(\endo_R^{\beta} T)_{A_n}$ in $\endo_R^{\beta} A_n$ which is isomorphic to $H^1(P, T_n^{(\beta)}) \cong H^1(P, T_n)$.
\end{proof}

\subsection{Action of the compatible pairs $\Gamma_n$ on $H^2(R, A_n)$} \label{secactioncomp}

In the remaining part of this section we show that there is a $1$-$1$ correspondence between the $\Gamma_n$-orbits of $H^2(R, A_n)$ and the $\Gamma_{n+d}$-orbits of $H^2(R, A_{n+d})$ for every sufficiently large $n$. Put
\begin{align*}
 a := & a(n) := \max \{ \exp H^2(R, T), \exp H^3(R, T_n) \} \ \text{ and } \\
 b := & b(n) := \exp H^1(R_{t_0}, P_n). 
\end{align*}
Since $T_{n+d}$ is torsion-free and $T_{n+d}$ equals $p \cdot T_n$, the cohomology groups $H^3(P, T_{n+d})$ and $H^1(P_{t_0}, T_{n+d})$ are naturally isomorphic to $H^3(P, T_n)$ and $H^1(P_{t_0}, T_n)$, respectively. This yields $a(n+d) = a(n)$ and $b(n+d) = b(n)$. Hence we may assume throughout this subsection that 
\begin{align} \label{assonn}
p^{n/d} \geq \max \{a, b\} \cdot b.
\end{align}
By Theorem \ref{thmendop} there exists a subgroup $E_n \leq \endo_R A_n$ of exponent $b$ with $\endo_R A_n = (\endo_R T)_{A_n} \oplus E_n$. Let $\rho_n$, $\rho$ and $\pi_n$ be the homomorphisms defined by
{\small $$
\xymatrix  @C=1.2pc @R=0.8pc{
  \mathllap{ \varphi \ } \ar@{|->}[rrr]                                 &       &       & \mathrlap{ (1, 1+\varphi) }   & \\
  \mathllap{ E_n } \ar[rrr]^{\rho_n}                                    &       &       & \Gamma_n                      & (\beta, \epsilon_{A_n}) \\
                                                                        &       &       &                               & \\
                                                                        &       &       &                               & \\
  \mathllap{ 1 + \max\{a, b\} \endo_p T } \ar[rrr]^{\rho}               &       &       & \Gamma \ar[uuu]_{\pi_n}       & (\beta, \epsilon) \ar@{|->}[uuu] \\
  \mathllap{\varphi \ } \ar@{|->}[rrr]                                  &       &       & \mathrlap{(1, \varphi)}       &                                     
}
$$
}
\begin{lem} \label{lemmaprhok}
The map $\rho_n$ is a group homomorphism and the image of $\rho_n$ centralizes $\im( \pi_n \circ \rho) \leq \Gamma_n$.
\end{lem}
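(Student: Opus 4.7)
The plan is to prove both statements by direct computation in $\Gamma_n \leq \aut R \times \aut A_n$ with componentwise multiplication. Since every element involved has trivial first component, all calculations reduce to ones in the second component inside $\endo A_n$.

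For the first claim, I would fix $\varphi, \psi \in E_n$ and expand
$$
(1, 1 + \varphi)(1, 1 + \psi) = (1, 1 + \varphi + \psi + \varphi\psi),
$$
so that it suffices to verify $\varphi\psi = 0$. Since $E_n$ has exponent $b$, the image $\psi(A_n)$ is contained in the $b$-torsion $A_n[b] := \{a \in A_n \mid ba = 0\}$, and $\varphi$ vanishes on $bA_n$. The crux will be to establish the inclusion $A_n[b] \subseteq bA_n$: writing $b = p^s$ and using the uniserial identity $T_{j+d} = pT_j$, one identifies $A_n[b] = T_{n-sd}/T_n$ and $bA_n = T_{sd}/T_n$, so the inclusion holds iff $n \geq 2sd$, equivalently $p^{n/d} \geq b^2$. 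Assumption \eqref{assonn} delivers $p^{n/d} \geq \max\{a,b\}\cdot b \geq b^2$, which gives $\psi(A_n) \subseteq bA_n \subseteq \ker \varphi$ and hence $\varphi\psi = 0$. Then $\rho_n(\varphi)\rho_n(\psi) = (1, 1 + \varphi + \psi) = \rho_n(\varphi + \psi)$.

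For the second claim, I would fix $\varphi \in E_n$ and $\xi = 1 + c\phi'$ with $c := \max\{a,b\}$, so the image under $\pi_n \circ \rho$ is $(1, 1 + c\,\phi'_{A_n})$. Expanding the two products in $\Gamma_n$, the difference between $(1, 1+\varphi)(1, \xi_{A_n})$ and $(1, \xi_{A_n})(1, 1+\varphi)$ lies entirely in the term $c(\varphi\, \phi'_{A_n} - \phi'_{A_n}\,\varphi)$. Since $a$ and $b$ are powers of $p$ with $b \leq c$, we have $b \mid c$; together with $b\varphi = 0$ this yields $c\varphi = 0$ as an endomorphism of $A_n$, so both $c(\varphi\,\phi'_{A_n})$ and $c(\phi'_{A_n}\,\varphi)$ vanish individually, giving the desired commutativity.

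The only substantive point is the identification $A_n[b]\subseteq bA_n$ under \eqref{assonn}; everything else is a routine bilinear expansion. I expect this numerical argument exploiting the uniserial $\Z_p$-structure of $T$ to be the main, though modest, obstacle.
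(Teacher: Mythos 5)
Your proof is correct and follows essentially the same route as the paper: the key point in both is that the exponent bound $bE_n=0$ together with Assumption \eqref{assonn} and the uniserial identity $p^sT=T_{sd}$ forces the composite of two such endomorphisms to vanish (your inclusion $A_n[b]\subseteq bA_n$ is just a repackaging of the paper's $b^{-1}T_n\leq b^{-1}p^mT$ computation). Your treatment of the centralizing claim via $c\varphi=0$ is in fact slightly cleaner than the paper's appeal to ``similar arguments'', since it needs no hypothesis on $n$ at all.
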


\begin{proof}
Let $\epsilon$ be an element of $E_n$. The exponent of $E_n$ is $b$ and hence the image of $\epsilon$ is a subgroup of $b^{-1} T_{n}/T_{n}$. Let $m$ be the floor of $n/d$, in particular $n-d < m d \leq n$. As $\max \{ a, b \} \cdot b$ is a $p$-power, we have $p^{n/d} \geq p^m \geq \max\{ a, b \} \cdot b$ by Assumption~\eqref{assonn} and thus $b^{-1} T_n \leq b^{-1} p^m T$. This yields $\im \epsilon \leq b^{-1} T_n/ T_n \leq b^{-1} p^m T/ T_n$. Let $\epsilon'$ be another element of $E_n$. Since multiplication by $p$ induces an $R$-endomorphism $A_n \rightarrow A_n$, % and $b^{-1} T_{n+kd}$ is a submodule of $(b^{-1} p^m) T_{kd}$, 
the composition $\epsilon \circ \epsilon'$ maps from $A_n$ to $(b^{-2} p^m T_{n}+T_{n})/ T_{n}$. Recall that $p^m$ is at least $b^2$. Hence $\epsilon \circ \epsilon'$ is trivial and $(1+\epsilon) \circ (1+\epsilon')$ equals $1+\epsilon +\epsilon'$. Thus $\rho_n$ is a group homomorphism. It follows by similar arguments that $\im \rho_n$ acts trivially on $\im( \pi_n \circ \rho)$.
\end{proof}

\begin{lem} \label{lemnatisocomp}
The groups $\im \rho$ and $\im(\pi_n \circ \rho)$ are normal in $\ct$ and $\cak{n}$, respectively. Further, the group homomorphisms $\rho_n: E_n \rightarrow \cak{n}$ and $\pi_n: \ct \rightarrow \cak{n}$ induce a split short exact sequence
$$
0 \rightarrow E_n \rightarrow \cak{n}/ \im(\pi_n \circ \rho) \rightarrow \Gamma/ \im \rho \rightarrow 1.
$$
\end{lem}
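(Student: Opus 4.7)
The plan is to exhibit $\Gamma_n/\im(\pi_n\circ\rho)$ as an internal semidirect product $\overline{\rho}_n(E_n) \rtimes \overline{\pi}_n(\Gamma/\im\rho)$, where $\overline{\rho}_n$ and $\overline{\pi}_n$ denote the maps induced by $\rho_n$ and $\pi_n$ after composing with the projection $\Gamma_n \twoheadrightarrow \Gamma_n/\im(\pi_n\circ\rho)$. The section of the claimed exact sequence will then be provided by $\overline{\pi}_n$, and the surjection onto $\Gamma/\im\rho$ by the quotient by $\overline{\rho}_n(E_n)$ followed by $\overline{\pi}_n^{-1}$.

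Two consequences of Assumption \eqref{assonn} are used throughout. First, $T_n \subseteq \max\{a,b\}\cdot T$, whence $\hom_p(T, T_n) \subseteq \max\{a,b\}\cdot\endo_p T$. Second, $b^{-1}T_n \subseteq \max\{a,b\}\cdot T$, so $\max\{a,b\}$ annihilates the submodule $b^{-1}T_n/T_n \subseteq A_n$; since every $\varphi \in E_n$ has image in $b^{-1}T_n/T_n$ (by the exponent bound of Theorem \ref{thmendop}), this gives $\max\{a,b\}\cdot\varphi = 0$. Normality of $\im\rho$ in $\Gamma$ and of $\im(\pi_n\circ\rho)$ in $\Gamma_n$ is then immediate from the conjugation identity
$$(\beta,\epsilon)(1, 1+\max\{a,b\}\varphi)(\beta,\epsilon)^{-1} = (1, 1+\max\{a,b\}\,\epsilon\varphi\epsilon^{-1}),$$
combined with $\epsilon\varphi\epsilon^{-1} \in \endo_p T$ in the first case, and $\epsilon\varphi\epsilon^{-1} \in \endo_p A_n = (\endo_p T)_{A_n}$ in the second (the latter equality using that $T$ is free).

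For the injectivity claims and trivial intersection: if $(1, 1+\varphi) \in \im(\pi_n\circ\rho)$ with $\varphi \in E_n$, then $\varphi = (\max\{a,b\}\psi)_{A_n}$ for some $\psi \in \endo_p T$; torsion-freeness of $T$ forces $\psi \in \endo_R T$, so $\varphi \in (\endo_R T)_{A_n} \cap E_n = 0$, showing $\overline{\rho}_n$ is injective. If $\pi_n(\beta,\epsilon) \in \im(\pi_n\circ\rho)$, then $\beta = 1$ and $\epsilon - 1 - \max\{a,b\}\psi \in \hom_p(T, T_n)$ for some $\psi$; the first technical inclusion then yields $\epsilon - 1 \in \max\{a,b\}\endo_p T$, so $(\beta,\epsilon) \in \im\rho$, showing $\overline{\pi}_n$ is injective. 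The same two arguments combine to give $\overline{\rho}_n(E_n) \cap \overline{\pi}_n(\Gamma/\im\rho) = 1$ inside $\Gamma_n/\im(\pi_n\circ\rho)$.

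The main obstacle lies in turning the additive splitting of Theorem \ref{thmendop} into a multiplicative factorization. Given $(\beta,\epsilon) \in \Gamma_n$, Theorem \ref{thmendop} yields $\epsilon = \tilde\epsilon_{A_n} + \xi$ with $\tilde\epsilon \in \endo_R^\beta T$ and $\xi$ in a complement; since $\xi$ has image in $b^{-1}T_n/T_n \subseteq pA_n$, reducing modulo $p$ shows $\tilde\epsilon$ is an automorphism of $T$, and hence $(\beta,\tilde\epsilon) \in \Gamma$. Setting $\mu := \tilde\epsilon_{A_n}^{-1}\xi \in \endo_R A_n$ and decomposing $\mu = \mu^T + \mu^E$ along $(\endo_R T)_{A_n} \oplus E_n$, the image of $\mu^T = \mu - \mu^E$ lies in $b^{-1}T_n/T_n$, so its lift to $\endo_R T$ can be chosen of the form $\max\{a,b\}\chi$ for some $\chi \in \endo_R T$. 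The crucial identity $\mu^T\mu^E = 0$ now follows from $\max\{a,b\}\cdot b^{-1}T_n/T_n = 0$: the image of $\mu^E$ lies in $b^{-1}T_n/T_n$, on which $\mu^T$ acts by multiplication by $\max\{a,b\}$. Hence $1+\mu = (1+\mu^T)(1+\mu^E)$, and
$$(\beta,\epsilon) \,\equiv\, (\beta,\tilde\epsilon_{A_n})\,(1,\,1+\mu^E) \pmod{\im(\pi_n\circ\rho)}.$$
Applying the same decomposition-and-lift argument to the conjugate $\epsilon\varphi\epsilon^{-1}$ of an arbitrary $\varphi \in E_n$ yields normality of $\overline{\rho}_n(E_n)$ in $\Gamma_n/\im(\pi_n\circ\rho)$, completing the semidirect product structure and hence the split short exact sequence.
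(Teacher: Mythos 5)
Your overall architecture is sound and, in substance, uses the same ingredients as the paper: Theorem \ref{thmendop} to split $\endo_R A_n$, and Assumption \eqref{assonn} to force products of ``small'' endomorphisms to vanish. The packaging differs: the paper introduces the reduction maps $\sigma\colon \ct \rightarrow \comp(R, T/cT)$ and $\sigma_n\colon \cak{n} \rightarrow \comp(R,T/cT)$ with $c=\max\{a,b\}$, identifies $\ct/\im\rho$ with $\im\sigma$, proves $\im\sigma=\im\sigma_n$ and $\cak{n}=\langle \im\pi_n,\im\rho_n\rangle$, and gets normality of $\im(\pi_n\circ\rho)$ from Lemma \ref{lemmaprhok} together with that generation statement; you instead build the internal semidirect product inside $\cak{n}/\im(\pi_n\circ\rho)$ directly. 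Your multiplicative factorization $1+\mu=(1+\mu^T)(1+\mu^E)$, justified by $\mu^T\mu^E=0$, is exactly the point the paper glosses over when it asserts $\cak{n}=\langle\im\pi_n,\im\rho_n\rangle$ ``by Theorem \ref{thmendop}''; making it explicit is a genuine improvement in transparency.

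There is, however, one step whose justification does not work as written: the normality of $\im(\pi_n\circ\rho)$ in $\cak{n}$. You conclude from $\epsilon\varphi\epsilon^{-1}\in\endo_p A_n=(\endo_p T)_{A_n}$ that the conjugate $(1,\,1+\max\{a,b\}\,\epsilon\varphi_{A_n}\epsilon^{-1})$ lies in $\im(\pi_n\circ\rho)$. But membership in $\im(\pi_n\circ\rho)$ requires exhibiting the second component as $(1+\max\{a,b\}\chi)_{A_n}$ with $(1,1+\max\{a,b\}\chi)\in\ct$, i.e.\ with $\chi$ an \emph{$R$-equivariant} endomorphism of $T$; a lift in $\endo_p T$ obtained from freeness of $T$ is not enough, and (unlike the case of $\im\rho$ in $\ct$, where torsion-freeness of $T$ upgrades the lift automatically) on $A_n$ one cannot divide by $\max\{a,b\}$ to repair this. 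The correct argument is the one you use everywhere else: $\epsilon\varphi_{A_n}\epsilon^{-1}$ lies in $\endo_R A_n=(\endo_R T)_{A_n}\oplus E_n$ by Theorem \ref{thmendop}, and multiplying by $\max\{a,b\}$ annihilates the $E_n$-component, leaving an element of $\max\{a,b\}\,(\endo_R T)_{A_n}=\bigl(\max\{a,b\}\,\endo_R T\bigr)_{A_n}$, which is visibly in the image of $\pi_n\circ\rho$. With that one substitution the proof is complete; the remaining steps (injectivity of the two induced maps, trivial intersection, invertibility of $\tilde\epsilon$ via Nakayama, and normality of $\overline{\rho}_n(E_n)$) all check out.
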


\begin{proof}
Put $c := \max \{a, b\}$ and let $\sigma$ and $\sigma_n$ be the group homomorphisms $\ct \rightarrow \comp(R, T/cT)$, $(\beta, \epsilon) \mapsto (\beta, \epsilon_{T/cT})$ and $\Gamma_n \rightarrow \comp(R, T/cT)$, $(\beta, \epsilon) \mapsto (\beta, \epsilon_{T/cT})$, respectively. Evidently, the image of $\rho$ is the kernel of $\sigma$. Hence $\im \rho$ is normal in $\Gamma$ and $\Gamma/\im \rho$ is isomorphic to $\im \sigma$. By assumption $p^{n/d}/b$ is at least $c$ and thus Theorem \ref{thmendop} and Equations \eqref{eqcomp} and \eqref{eqcompak} yield $\Gamma_n = \langle \im \pi_n, \im \rho_n\rangle$ and $\im \sigma = \im \sigma_n$. Since $\im \rho$ is normal in $\Gamma$, the image $\im (\pi_n \circ \rho)$ is normalized by $\im \pi_n$. Further, by Lemma \ref{lemmaprhok} the group $\im( \pi_n \circ \rho)$ is centralized by $\im \rho_n$ and hence normal in $\Gamma_n =  \langle \im \pi_n, \im \rho_n \rangle$. Note that $\Gamma_n/ \im( \pi_n \circ \rho)$ is isomorphic to $\im \sigma_n$. Now it is straightforward to deduce from $\im \sigma = \im \sigma_n$ that $\rho_n$ and $\pi_n$ induce a split short exact sequence.
\end{proof}

Lemma \ref{lemnatisocomp} enables us to prove Theorem \ref{thmbijorbits}. By Lemma \ref{lemescohoms} the exponent of the second cohomology group $H^2(R, A_n) \cong H^2(R, T) \oplus H^3(R, T_{n})$ is equal to the $p$-power~$a$. As the elements of $\im( \pi_n \circ \rho )$ are of the form $(1, 1 + a \cdot \varphi_{A_n})$ with $\varphi \in \endo_R $, the group $\im( \pi_n \circ \rho)$ acts trivially on $H^2(R, A_n)$. Thus, in order to prove Theorem \ref{thmbijorbits} it suffices to consider the $\cak{n}/ \im (\pi_n \circ \rho)$-orbits of $H^2(R, A_n)$.
 
Recall that $a(n+d) = a(n)$ and $b(n+d) = b(n)$. In particular the results in this subsection also hold for $n+d$. Let $E$ be a set of homomorphisms $T \rightarrow T$ such that $E_{A_n} = E_n$, and put $E_{n+d} := (p \cdot E)_{A_{n+d}}$. Evidently, $E_{n+d}$ intersects trivially with $(\endo_R T)_{A_{n+d}}$ and is isomorphic to $E_n \cong H^1(R_{t_0}, T_n)$. Since $H^1(R_{t_0}, T_n)$ is isomorphic to $H^1(R_{t_0}, T_{n+d})$, it follows by Theorem \ref{thmendop} that $E_{n+d} \oplus (\endo_R T)_{A_{n+d}} = \endo_R A_{n+d}$. Further, by Lemma \ref{lemnatisocomp} the factor groups $\Gamma_n/ \im( \pi_n \circ \rho)$ and $\Gamma_{n+d}/ \im( \pi_{n+d} \circ \rho)$ are naturally isomorphic to $\Gamma/ \im \rho \ltimes E_n$ and $\Gamma/ \im \rho \ltimes E_{n+d}$, respectively.

Hence there exists a unique group isomorphism $\lambda$ such that the following diagram commutes,
$$
\xymatrix@1{
  E_n \ar[r] \ar[d]     & \cak{n}/ \im( \pi_n \circ \rho ) \ar[d]_{\lambda}    & \ct / \im \rho \ar[l] \ar@{=}[d]_{\id} \\
  E_{n+d} \ar[r]        & \cak{n+d}/ \im( \pi_{n+d} \circ \rho )               & \ct / \im \rho \ar[l]
}
$$
where the horizontal arrows are induced by $\rho_n$, $\rho_{n+d}$, $\pi_n$ and $\pi_{n+d}$, and $E_n \rightarrow E_{n+d}$ maps $\epsilon_{A_n}$ to $p \epsilon_{A_{n+d}}$.

Let $m = m(l, S)$ and $(\id \oplus \mu): H^2(R, A_n) \rightarrow H^2(R, A_{n+d})$ be defined as in Theorem~\ref{thmseccohom}.

\begin{thm} \label{thmmumodulhom}
In addition to Assumption \eqref{assonn} we assume that $n$ is at least $m$. For $\tau \in H^2(R, A_n)$ and $g \in \cak{n}/ \im (\pi_n \circ \rho)$ we have $(\id \oplus \mu)( \tau.g ) = (\id \oplus \mu)(\tau).\lambda(g)$. In particular $(\id \oplus \mu)$ induces a $1$-$1$ correspondence from $\cak{n}$-orbits to $\cak{n+d}$-orbits.
\end{thm}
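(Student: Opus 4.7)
The plan is to reduce the equivariance statement to a check on generators of $\Gamma_n/\im(\pi_n\circ\rho)$, and then exploit the naturality of the long exact cohomology sequence together with the $\Z_p$-linearity of the relevant maps.

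First, Lemma \ref{lemnatisocomp} presents $\Gamma_n/\im(\pi_n\circ\rho)$ as a semidirect product $(\Gamma/\im\rho)\ltimes E_n$, with the analogous decomposition for $n+d$, and by construction $\lambda$ restricts to the identity on the $\Gamma/\im\rho$ factor and to the map $\varphi\mapsto p\varphi$ on $E_n$. Since $(\id\oplus\mu)$ is a group homomorphism and the two actions are by automorphisms, it suffices to establish the identity $(\id\oplus\mu)(\tau.g)=(\id\oplus\mu)(\tau).\lambda(g)$ separately for $g\in\im\pi_n$ and for $g\in\im\rho_n$.

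For $g=\pi_n(\beta,\epsilon)$ with $(\beta,\epsilon)\in\Gamma$: since the $R$-central series $T\geq T_1\geq T_2\geq\cdots$ is stabilised by $\epsilon$, the automorphism $\epsilon$ induces a self-morphism of the short exact sequence $0\to T_n\to T\to A_n\to 0$. Naturality of the induced long exact sequence, and naturality of the splitting constructed in Lemma \ref{lemsescohoms}, imply that the $(\beta,\epsilon_{A_n})$-action on $H^2(R,A_n)\cong H^2(R,T)\oplus H^3(R,T_n)$ decomposes as the direct sum of the $(\beta,\epsilon)$-action on $H^2(R,T)$ and the $(\beta,\epsilon|_{T_n})$-action on $H^3(R,T_n)$. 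The same analysis at level $n+d$ produces the identical action on the first summand and the $(\beta,\epsilon|_{T_{n+d}})$-action on the second. Because $\epsilon$ is $\Z_p$-linear, it commutes with multiplication by $p$, so the $R$-isomorphism $T_n\to T_{n+d}$, $t\mapsto pt$, intertwines the two actions on $H^3$; this is precisely the statement that $\mu$ is equivariant, and yields the claim for this first type of generator.

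For $g=\rho_n(\varphi)=(1,1+\varphi)$ with $\varphi\in E_n$: the action sends a representative cocycle $\gamma$ to $\gamma+\varphi\circ\gamma$. Since $E_n$ has exponent $b$, the image of $\varphi$ lies in $b^{-1}T_n/T_n$, so $\varphi\circ\gamma$ is a cocycle with values in this $R$-submodule of $A_n$ killed by $b$. The resulting class can be analysed via the morphism of short exact sequences
\[
\begin{array}{ccccccccc}
0 & \to & T_n & \to & b^{-1}T_n & \to & b^{-1}T_n/T_n & \to & 0 \\
  &     & \| &      & \cap &      & \cap            &     &   \\
0 & \to & T_n & \to & T & \to & A_n & \to & 0
\end{array}
\]
and the naturality of the connecting homomorphism $\delta_2$ shows that $[\varphi\circ\gamma]$ lies in the image of the splitting of Lemma \ref{lemsescohoms}, i.e.\ in the $H^3(R,T_n)$-summand of the Theorem \ref{thmseccohom} decomposition, so that the $H^2(R,T)$-component of $\tau$ is left unchanged. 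An analogous calculation for $\rho_{n+d}(p\varphi)$ gives the cocycle $p\varphi\circ\gamma'$ on the $(n+d)$-side, and since the connecting map and the formation of $\varphi\circ\gamma$ are both natural in the coefficient module, the isomorphism $\mu$ induced by $t\mapsto pt$ carries the effect of $\rho_n(\varphi)$ to the effect of $\rho_{n+d}(p\varphi)$.

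The main obstacle is this second case: one must verify precisely that $\varphi\circ\gamma$ has no component in $H^2(R,T)$ and that its $H^3(R,T_n)$-component matches the one produced at level $n+d$ under $\mu$. This is where Assumption \eqref{assonn} ($p^{n/d}\geq\max\{a,b\}\cdot b$) is crucial, since it guarantees that the splittings of Lemma \ref{lemsescohoms} are available and compatible at both levels $n$ and $n+d$, and that multiplication by $p$ really does realise the isomorphism between the torsion contributions on the two sides. Once the equivariance is established, the induced bijection on orbits is automatic: $(\id\oplus\mu)$ is a group isomorphism intertwining the two actions, and as already noted the subgroups $\im(\pi_n\circ\rho)$ and $\im(\pi_{n+d}\circ\rho)$ act trivially, so $\Gamma_n$-orbits coincide with the orbits we have just matched.
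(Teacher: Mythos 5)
Your overall skeleton matches the paper's: reduce to generators of $\cak{n}/\im(\pi_n\circ\rho)$, treat the $\ct/\im\rho$-part as unproblematic, and concentrate on the $E_n$-generators, which is where the whole difficulty of the corrigendum lives. The problem is that your justification of the key step for the $E_n$-generators does not work. You argue that naturality of the connecting homomorphism $\delta_2$ applied to the inclusion of $b^{-1}T_n/T_n$ into $A_n$ shows that the class of $\varphi\circ\gamma$ lies in the $H^3(R,T_n)$-summand. But $\delta_2$ is, up to the splitting, the projection of $H^2(R,A_n)$ onto $H^3(R,T_n)$ whose kernel is precisely the $H^2(R,T)$-summand $\im\vartheta_2$; naturality therefore only computes the $H^3(R,T_n)$-component of that class and gives no information about its component in $\im\vartheta_2=\ker\delta_2$. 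Showing that this $H^2(R,T)$-component vanishes is exactly the point at issue --- the counterexample in Section \ref{seccounterexample} shows that for general elements of $\cak{n}$ the summand $H^2(R,T)$ is \emph{not} stabilised, so no soft naturality argument can settle it.

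What actually closes the argument is the quantitative estimate that you relegate to a vague closing remark but never carry out. Writing $\tau=\gamma_{A_n}+\delta_{A_n}+B^2(R,A_n)$ with $\gamma\in Z^2(R,T)$ and with $\delta_{A_n}+B^2(R,A_n)$ in the $H^3(R,T_n)$-summand, one has $\tau.(1+\epsilon_{A_n})=\tau+\gamma_{A_n}.\epsilon_{A_n}+\delta_{A_n}.\epsilon_{A_n}$, and one must (a) kill the cross term $\delta_{A_n}.\epsilon_{A_n}$, which you do not mention at all, and (b) show that $\gamma_{A_n}.\epsilon_{A_n}$ contributes only to the $H^3(R,T_n)$-summand. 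Both follow from Assumption \eqref{assonn}: since $\im\epsilon_{A_n}\leq b^{-1}T_n/T_n$ and $T_n\leq p^kT$ with $k$ the floor of $n/d$ and $p^k\geq\max\{a,b\}\cdot b$, one gets $\im\epsilon_{A_n}\leq aT/T_n$, so that $a\geq\exp H^2(R,T)$ annihilates the $H^2(R,T)$-contribution, while $\im\delta_{A_n}\leq a^{-1}T_n/T_n$ forces $\delta_{A_n}.\epsilon_{A_n}=0$; the same estimates at level $n+d$ then identify the outcome with $\gamma_{A_{n+d}}.(p\epsilon_{A_{n+d}})$. Without these computations the identity $(\id\oplus\mu)(\tau.g)=(\id\oplus\mu)(\tau).\lambda(g)$ is not established. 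A smaller caveat of the same nature affects your treatment of the $\im\pi_n$-generators: naturality shows that $\im\vartheta_2$ is preserved, but not that the chosen complement $K$ of Lemma \ref{lemsescohoms} is, so a priori the action is only block upper triangular rather than a direct sum of actions; that part is salvageable, but as written it too is incomplete.
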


\begin{proof}
It suffices to show that for $\epsilon \in E$ the image of $\tau.(1+\epsilon_{A_n})$ under $(\id \oplus \mu)$ equals $(\id \oplus \mu)(\tau).(1+p \epsilon_{A_{n+d}})$. By construction of $(\id \oplus \mu)$ there exist $\gamma \in Z^2(R, T)$ and $\delta \in C^2(R, T)$ such that $\tau  = \gamma_{A_n} + \delta_{A_n} + B^2(R, A_n)$ and the image of $\delta_{A_n} + B^2(R, A_n)$ under the canonical isomorphism $H^2(R, A_n) \rightarrow H^2(R, T) \oplus H^3(R, T_n)$ lies in the direct summand $H^3(R, T_n)$. In particular
$$
(\id \oplus \mu)(\tau) = \gamma_{A_{n+d}} +  p \delta_{A_{n+d}} + B^2(R, A_{n+d}).
$$
Recall that $a\geq \exp H^3(R, T_n)$ and $b\geq \exp E_n$. Hence we may assume that $\im \delta_{A_n} \leq a^{-1} T_n/ T_n$ and $\im \epsilon_{A_n} \leq b^{-1} T_n/ T_n$. Let $k$ be the floor of $n/d$, in particular $n \geq kd \geq n-d$. As $\max\{a, b\} \cdot b$ is a $p$-power, the inequality $p^{n/d} \geq p^k \geq \max\{a, b\} \cdot b$ holds. Further $T_n$ is a subgroup of $T_{kd} = p^k T$, and thus $\im \epsilon_{A_n} \leq p^k b^{-1} T/T_n \leq a T /T_n$. It follows that the image of $\delta_{A_n}.\epsilon_{A_n}$ is a subgroup of $(p^k a^{-1} b^{-1}T_n + T_n)/T_n$ and hence $\delta_{A_n}.\epsilon_{A_n}$ and $\delta_{A_{n+d}}.(p \epsilon_{A_{n+d}})$ are trivial. This yields
\begin{align*}
 \tau.(1+\epsilon_{A_n}) & = \tau + \gamma_{A_n}.\epsilon_{A_n} + B^2(R, A_n) \ \text{ and } \\
 (\id \oplus \mu)(\tau).(1+p\epsilon_{A_{n+d}}) & = (\id \oplus \mu)(\tau) + \gamma_{A_{n+d}}.(p \epsilon_{A_{n+d}}) + B^2(R, A_{n+d}).
\end{align*}
Since $\im \epsilon_{A_n}$ is a subgroup of $aT/T_n$ and $a$ is at least $\exp H^2(R, T)$, the element $\gamma_{A_n}.\epsilon_{A_n} + B^2(R, A_n)$ lies in the summand of $H^2(R, A_n) \cong H^2(R, T) \oplus H^3(R, T_n)$ which is canonically isomorphic to $H^3(R, T_n)$. Thus the image $(\id \oplus \mu)(\gamma_{A_n}.\epsilon_{A_n} + B^2(R, A_n))$ equals $\gamma_{A_{n+d}}.(p \epsilon_{A_{n+d}}) + B^2(R, A_{n+d})$. The result follows.
\end{proof}

\begin{proof}[Proof of Theorem \ref{thmbijorbits}]
A reformulation of Theorem \ref{thmmumodulhom} yields Theorem \ref{thmbijorbits}
\end{proof}

\section{The graph isomorphism}

In what follows, we construct graph isomorphisms between branches of the shaved coclass tree. This construction goes back to Eick \& Leedham-Green \cite{Eick2007}. The proof in \cite{Eick2007} showing that the graph isomorphism is well-defined is based on the false result \cite[Theorem 23]{Eick2007}. We will give a revised version of the proof. For this purpose, we need the following notation and lemmata.

Let $m$ be as in Theorem~\ref{thmseccohom}, let $n$ denote a natural number, and as in Subsection~\ref{secactioncomp} let $a(n)$ and $b(n)$ be defined as $\max\{ \exp H^2(R, T), \exp H^3(R, T_n) \}$ and $\exp H^1(R_{t_0}, T_n)$, respectively. Recall that $a(n+d) = a(n)$ and $b(n+d) = b(n)$ by the uniserial action of $R$ on $T$. Hence there exists a natural number $v$ such that 
$$
p^v \geq m \text{ and } p^v \geq b(n) \cdot \max\{a(n), b(n)\}
$$
for every natural number $n$.

\begin{lem} \label{lemmucoclass1}
Assume that $n \geq vd$. Let $\tau \in Z^2(R, A_n)$ and $\tau_* \in Z^2(R, A_{n+d})$ be cocycles such that the image of $\tau + B^2(R, A_n)$ under $(\id \oplus \mu)$ equals $\tau_* + B^2(R, A_{n+d})$. Then $E(\tau)$ has coclass $r$ if and only if this is the case for $E(\tau_*)$.
\end{lem}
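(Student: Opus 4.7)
The plan is to reduce the condition $\cc E(\tau) = r$ to a condition on a small ``head'' quotient of $E(\tau)$ whose isomorphism type is preserved by $(\id \oplus \mu)$.

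First I would characterise coclass $r$ group-theoretically. Since $|E(\tau)| = |R| \cdot p^n$, we have $\cc E(\tau) = r = \cc R$ exactly when the nilpotency class of $E(\tau)$ equals $c(R) + n$. The quotient $E(\tau)/A_n \cong R$ has class $c(R)$, so the $(c(R)+1)$-st term of the lower central series of $E(\tau)$ lies in $A_n$; being normal in $E(\tau)$ and contained in the abelian $R$-module $A_n$, this term is automatically an $R$-submodule. Since $R$ acts uniserially on $A_n$, every $R$-submodule equals $T_k/T_n$ for some $0 \leq k \leq n$, and an induction along the $R$-central series of $A_n$ gives that all subsequent terms of the lower central series of $E(\tau)$ are $T_{k+i}/T_n$, so the nilpotency class of $E(\tau)$ equals $c(R) + n - k$. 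Hence $E(\tau)$ has coclass $r$ if and only if $k = 0$, equivalently if and only if the head quotient $E(\tau)/(T_1/T_n)$ has nilpotency class exactly $c(R) + 1$.

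Second I would identify the head quotient as an extension of $T/T_1$ by $R$ determined by the image $\bar\tau \in H^2(R, T/T_1)$ of $\tau$ under the map induced by the projection $A_n \twoheadrightarrow T/T_1$, and argue that this image coincides with the corresponding image $\bar\tau_*$ of $\tau_*$. Writing $\tau = \gamma_{A_n} + \delta_{A_n}$ with $\gamma \in Z^2(R, T)$ and $\delta_{A_n}$ a cocycle representing the $H^3(R, T_n)$-summand of Theorem~\ref{thmseccohom} (as in the proof of Theorem~\ref{thmmumodulhom}), one has $\tau_* = \gamma_{A_{n+d}} + (p\delta)_{A_{n+d}}$. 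The Smith normal form construction in the proof of Lemma~\ref{lemsescohoms} permits us to choose $\delta_{A_n}$ with image inside $a(n)^{-1} T_n / T_n$. The bound $n \geq vd$ combined with $p^v \geq b(n) \cdot \max\{a(n), b(n)\}$ and the uniserial relation $p \cdot T_i = T_{i+d}$ then forces $a(n)^{-1} T_n \subseteq T_1$, so $\delta_{A_n}$ takes values in $T_1/T_n$ and projects to the zero cocycle in $Z^2(R, T/T_1)$. The same reasoning applied to $(p\delta)_{A_{n+d}}$ shows that the images of both $\tau$ and $\tau_*$ in $H^2(R, T/T_1)$ agree with the image of $\gamma$, so the two head quotients are isomorphic to the common extension $E(\bar\gamma)$ of $T/T_1$ by $R$.

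The main obstacle lies in this depth estimate: one must carefully track the Smith normal form bounds to confirm that the specific choice of $v$ really pushes the $H^3(R, T_n)$-contribution of $\tau$ into $T_1/T_n$, and in particular to handle the boundary case where $b(n) = 1$ so that the gap between $p^v$ and $a(n)$ is only marginal. Once the head quotients are known to coincide, the equivalence of the coclass conditions for $E(\tau)$ and $E(\tau_*)$ is immediate from the first step, since ``nilpotency class equals $c(R) + 1$'' is an intrinsic property of the common head quotient $E(\bar\gamma)$.
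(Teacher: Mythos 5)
Your argument is correct and essentially the paper's own: the paper likewise writes $\tau = \gamma_{A_n} + \epsilon_{A_n}$ with the $H^3(R,T_n)$-component $\epsilon$ represented by cochains with values in $T_{n-\log_p a\cdot d}$, observes that $\tau$, $\tau_*$ and $\gamma$ then agree on a head quotient (the paper reduces modulo $T_d$, you modulo $T_1$ --- an immaterial difference), and detects coclass $r$ on that quotient via the uniseriality argument you spell out. The marginal case you flag (when $b(n)=1$, so the hypothesis only gives $p^v\geq a(n)$ rather than $p^v\geq p\,a(n)$) is equally present in the paper's proof, which asserts $vd>\log_p a\cdot d$ at the corresponding step, so your proposal is no weaker than the original on this point.
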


\begin{proof}
Let $l$ be the nilpotency class of $R$. Note that a group extension $E(\lambda)$ defined by a cocycle $\lambda \in Z^2(R, A_m)$ with $m \in \mathbb{N}_0$ has coclass $r = \cc R$ if and only if $\gamma_l( E(\lambda) ) = A_m$, where $\gamma_l(E(\lambda))$ denotes the $l$-th term of the lower central series of $E(\lambda)$.

Further, recall that $a = a(n) \geq \exp H^3(R, T_n)$. Then by construction of $(\id \oplus \mu)$ there exists $\gamma \in Z^2(R,T)$ and $\epsilon \in C^2(R, T_{n-\log_p a \cdot d})$ such that $\gamma_{A_n} + \epsilon_{A_n} = \tau$ and $\gamma_{A_{n+d}}+ p \cdot \epsilon_{A_{n+d}}$ equals $\tau_*$ modulo $B^2(R, A_{n+d})$. Since $n \geq vd > \log_p a \cdot d$, we have $\epsilon_{A_d} = 0$. It follows that $\gamma_{A_d} = \tau_{A_d} = (\tau_*)_{A_d}$. Then for $\lambda \in \{ \tau, \tau_*\}$ the group extension $E(\lambda)$ has coclass $r$ if and only if $\gamma_l(E(\gamma_{A_d})) = A_d$ holds. The result follows. 
\end{proof}

\begin{lem} \label{lemmucoclass2}
Assume that $n \geq v d$. Then $(\id \oplus \mu)$ induces a bijection between the isomorphism classes of group extensions of $A_n$ by $R$ of coclass $r$ and the isomorphism classes of group extensions of $A_{n+d}$ by $R$ of coclass $r$. 
\end{lem}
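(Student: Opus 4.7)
The plan is to combine three earlier results: the orbit bijection of Theorem~\ref{thmbijorbits}, the coclass-preservation statement of Lemma~\ref{lemmucoclass1}, and the classification of isomorphism classes of extensions by $\comp$-orbits provided by Theorem~\ref{thmstrongiso}.

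First I would verify that the hypothesis $n \geq vd$ is strong enough to activate all relevant previous results. From $n/d \geq v$ we obtain $p^{n/d} \geq p^v \geq b(n)\cdot\max\{a(n),b(n)\}$, which is precisely Assumption~\eqref{assonn} for $n$; the identities $a(n+d) = a(n)$ and $b(n+d) = b(n)$ then give the same assumption for $n+d$. Combined with $p^v \geq m$, this places us in the range in which Theorem~\ref{thmseccohom}, Theorem~\ref{thmmumodulhom} (hence Theorem~\ref{thmbijorbits}), and Lemma~\ref{lemmucoclass1} all apply to both $n$ and $n+d$.

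Theorem~\ref{thmbijorbits} then yields a canonical bijection $\Phi$ between $\Gamma_n$-orbits on $H^2(R,A_n)$ and $\Gamma_{n+d}$-orbits on $H^2(R,A_{n+d})$, induced by $(\id \oplus \mu)$. By Theorem~\ref{thmstrongiso} any compatible pair sends a cohomology class to one whose associated extension is isomorphic to the original, so the coclass of $E(\gamma)$ is constant along each $\Gamma_n$-orbit and along each $\Gamma_{n+d}$-orbit. Lemma~\ref{lemmucoclass1} tells us moreover that the coclass-$r$ condition is preserved by $(\id \oplus \mu)$ at the level of cocycle representatives. Consequently $\Phi$ restricts to a bijection between those $\Gamma_n$-orbits on $H^2(R,A_n)$ whose elements represent coclass-$r$ extensions and the analogous $\Gamma_{n+d}$-orbits on $H^2(R,A_{n+d})$.

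Finally, Theorem~\ref{thmstrongiso} identifies each such orbit (for $m \in \{n, n+d\}$) with exactly one isomorphism class of coclass-$r$ extensions of $A_m$ by $R$. Composing these two identifications with the restricted $\Phi$ yields the required bijection between isomorphism classes. I do not expect any substantial obstacle, since the heavy technical work is already carried out in the cited results; the only point that deserves explicit mention is the orbit-invariance of coclass, which is an immediate consequence of Theorem~\ref{thmstrongiso}.
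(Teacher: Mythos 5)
Your proof is correct and follows essentially the same route as the paper: Theorem~\ref{thmstrongiso} to identify isomorphism classes of coclass-$r$ extensions with $\comp$-orbits, Theorem~\ref{thmbijorbits} for the orbit bijection, and Lemma~\ref{lemmucoclass1} for preservation of the coclass-$r$ condition. Your explicit checks of Assumption~\eqref{assonn} and of the orbit-invariance of coclass only spell out points the paper's proof leaves implicit.
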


\begin{proof}
Let $\gamma_1, \gamma_2 \in Z^2(R, A_n)$ be such that $E(\gamma_1)$ and $E(\gamma_2)$ have coclass $r$ and $E(\gamma_1)$ is isomorphic to $E(\gamma_2)$. Then by Theorem \ref{thmstrongiso} the elements $\gamma_1 + B^2(R, A_n)$ and $\gamma_2 + B^2(R,A_n)$ lie in the same orbit under the action of $\comp(R, A_n)$. By Theorem \ref{thmbijorbits} the map $(\id \oplus \mu)$ induces a bijection between the orbits of $H^2(R,A_n)$ under the action of $\comp(R, A_n)$ and the orbits of $H^2(R, A_{n+d})$  under the action of $\comp(R, A_{n+d})$. The result follows by Lemma \ref{lemmucoclass1}. 
\end{proof}

Lemma \ref{lemmucoclass2} enables us to construct graph isomorphisms between the branches of the shaved coclass tree $\mathcal{T}_k(S)$. For a $p$-group $G$ of coclass $r$ let $\vertex{G}$ denote the vertex in $\mathcal{G}(p,r)$ corresponding to the isomorphism class of $G$. Further, for $n \in \mathbb{N}$ and a cocycle $\gamma \in Z^2(R, A_n)$ defining a group extension $E(\gamma)$ of coclass $r$ let $\vertex{\gamma}$ denote $\vertex{E(\gamma)}$.

Let $l$ be as in Theorem \ref{thmdescendantscctree} and let $\mathcal{B}_i$ denote the $i$-th branch of the shaved coclass tree $\mathcal{T}_k(S)$. In particular $T$ equals $\gamma_{l}(S)$. For $i \geq v d + l$ we write $\nu=\nu(i)$ to denote the map $\mathcal{B}_{i} \rightarrow \mathcal{B}_{i+d}$ induced by $(\id \oplus \mu)$, that is, for a cocycle $\gamma \in Z^2(R,A_n)$ with $n \in \mathbb{N}$ and $\vertex{\gamma} \in \mathcal{B}_{i}$ the vertex $\vertex{\gamma}$ is mapped to $\vertex{\gamma_*}$, where $\gamma_*$ is a representative of $(\id \oplus \mu)(\gamma + B^2(R,A_{n+d}))$.

\begin{thm} \label{thmgraphisoexplicit} 
Let $i \in \mathbb{N}$ be such that $i-l \geq vd$. Then $\nu$ is a well-defined graph isomorphism.
\end{thm}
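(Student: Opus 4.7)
My plan is to split Theorem \ref{thmgraphisoexplicit} into three parts: (1) $\nu$ is well-defined on vertices, (2) $\nu$ bijects the vertex set of $\mathcal{B}_i$ onto that of $\mathcal{B}_{i+d}$, and (3) $\nu$ preserves the directed edge relation.

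For (1), I would combine Theorems \ref{thmstrongiso} and \ref{thmbijorbits}. A vertex $\vertex{\gamma} \in \mathcal{B}_i$ represents the isomorphism class of $E(\gamma)$, which has coclass $r = \cc R$ since $\vertex{\gamma}$ lies in $\mathcal{T}_k(S) \subseteq \mathcal{G}(p,r)$, and by Theorem \ref{thmstrongiso} this class corresponds bijectively to the $\comp(R, A_n)$-orbit of $\gamma + B^2(R, A_n)$. Since $\vertex{\gamma} \in \mathcal{B}_i$ forces $n \geq i-l \geq vd$, Theorem \ref{thmbijorbits} applies and transports this orbit to a $\comp(R, A_{n+d})$-orbit; Lemma \ref{lemmucoclass1} (coclass preservation) and a second application of Theorem \ref{thmstrongiso} then produce a well-defined vertex $\vertex{\gamma_*}$, independent of the representative $\gamma$.

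For (2), I would first verify $\vertex{\gamma_*} \in \mathcal{B}_{i+d}$ using the decomposition of Theorem \ref{thmseccohom} and the explicit analysis in the proof of Lemma \ref{lemmucoclass1}: writing $\gamma = \alpha_{A_n} + \epsilon_{A_n}$ with $\alpha \in Z^2(R, T)$ and $\epsilon$ supported in a deep submodule, the image $\gamma_*$ agrees with $\alpha_{A_{n+d}}$ modulo a piece $p \cdot \epsilon_{A_{n+d}}$ that vanishes on shallow quotients, so the canonical quotient of $E(\gamma_*)$ at level $i-l+d$ is the defining cocycle of $S/S_{i+d}$ precisely when the canonical quotient of $E(\gamma)$ at level $i-l$ is that of $S/S_i$. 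Bijectivity of $\nu$ on vertices then follows from Theorem \ref{thmbijorbits} (injectivity, via Theorem \ref{thmstrongiso}) together with Lemma \ref{lemmucoclass2} restricted to the image branch (surjectivity).

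For (3), which I expect to be the main obstacle, a directed edge $G = E(\gamma) \to H = E(\tilde\gamma)$ within $\mathcal{B}_i$, with $\gamma \in H^2(R, A_n)$ and $\tilde\gamma \in H^2(R, A_{n+j})$, corresponds to the natural deflation $H^2(R, A_{n+j}) \to H^2(R, A_n)$ induced by $A_{n+j} \twoheadrightarrow A_n$ sending $\tilde\gamma$ into the $\comp(R, A_n)$-orbit of $\gamma$. The task is to show that $(\id \oplus \mu)$ commutes with these deflations: the $H^2(R, T)$-summand in the decomposition of Theorem \ref{thmseccohom} is functorial in $n$ and untouched by $(\id \oplus \mu)$, while the $\mu$-action on the $H^3(R, T_n)$-summand is induced by the canonical module map $T_n \to T_{n+d}$, $t \mapsto p t$, which is compatible with the inclusions $T_{n+j} \hookrightarrow T_n$. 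Carefully tracking this compatibility on the $H^3$-summand across the explicit cocycle formulas used in Lemmas \ref{lemescohoms} and \ref{lemsescohoms} is where I anticipate the most delicate bookkeeping.
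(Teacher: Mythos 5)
Your proposal is correct and follows essentially the same route as the paper: well-definedness and coclass preservation via Theorem \ref{thmstrongiso}, Theorem \ref{thmbijorbits} and Lemmas \ref{lemmucoclass1}--\ref{lemmucoclass2}, membership in $\mathcal{B}_{i+d}$ via the decomposition $\tau = \delta_{A_n} + \epsilon_{A_n}$ with $\epsilon$ supported deep enough that $\tau_{A_{i-l}} = \delta_{A_{i-l}}$, and edge preservation via the compatibility of $(\id \oplus \mu)$ with the deflations $A_{j+1} \twoheadrightarrow A_j$ using one uniform cocycle decomposition. The only point worth making explicit in your step (2) is that passing from ``$\delta_{A_{i-l}}$ defines $S/S_i$'' to ``$\delta_{A_{i-l+d}}$ defines $S/S_{i+d}$'' uses that $E(\delta)$ is a pro-$p$-group of coclass $r$ and hence isomorphic to $S$, since each coclass tree corresponds to exactly one such pro-$p$-group.
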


\begin{proof}
First, note that the image of a vertex under $\nu$ does not depend on the chosen cocycle by Lemma \ref{lemmucoclass2}. 

Let $G$ and $H$ be groups in $\mathcal{B}_{i}$ such that $G$ is an immediate descendant of $H$. Note that $G$ and $H$ are descendants of $S/\gamma_{i}(S)$. Then there are $n \in \mathbb{N}$ and $\tau \in Z^2(R, A_n)$ such that $\vertex{G} = \vertex{\tau}$, $\vertex{H} = \vertex{\tau_{A_{n-1}}}$ and $\vertex{\tau_{A_{i-l}}} = \vertex{S/\gamma_i(S)}$. 

By construction of $(\id \oplus \mu)$ we may assume that there exist $\delta \in Z^2(R, T)$ and $\epsilon \in C^2(R, T_{n- \log_p a(n) \cdot d})$ such that $\tau = \delta_{A_n} + \epsilon_{A_n}$ and for every $j \in \mathbb{N}$ with $n \geq j \geq i-l$ we have
$$
(\id \oplus \mu)( \tau_{A_j} + B^2(R, A_j)) = \delta_{A_{j+d}} + p \epsilon_{A_{j+d}} + B^2(R, A_{j+d}).
$$
If follows that $\nu(V(G))$ is an immediate descendant of $\nu(V(H))$. Since $n \geq vd > \log_p a(n) \cdot d$, the cocycle $\tau_{A_d}$ equals $\delta_{A_d}$. Hence $G$ is a descendant of $E(\delta_{A_d})$ and the coclass of $E(\delta_{A_d})$ equals $r = \cc R = \cc G$ by the uniserial action of $R$ on $T$. It follows that $E(\delta)$ is a pro-$p$-group of coclass $r$. As every coclass tree corresponds to exactly one isomorphism class of an infinite pro-$p$-group of coclass $r$ by definition, the group $E(\delta)$ is isomorphic to $S$. Thus $E(\delta_{A_{i-l}}) \cong S/ \gamma_i(S)$ and $E(\delta_{A_{i-l+d}}) \cong S/\gamma_{i+d}(S)$. This yields that $\nu(V(G))$ is a descendant of $\nu( V(S/\gamma_i(S)) ) = V( S/ \gamma_{i+d}(S) )$ and hence $\im \nu \subseteq \mathcal{B}_{i+d}$. It follows that $\nu$ is a graph isomorphism.

Now, we show that $\nu$ is a bijection. For this purpose let $\nu^{-1}:\mathcal{B}_{i+d} \rightarrow \mathcal{B}_i$ be the graph homomorphism induced by $(\id \oplus \mu)^{-1}$. Similarly to above, we can show that $\nu^{-1}$ is a well-defined graph homomorphism. Evidently $\nu^{-1}$ is an inverse of $\nu$ and thus $\nu$ is a bijection.
\end{proof}

\section{Counterexample to \cite[Theorem 23]{Eick2007}} \label{seccounterexample}

A counterexample to Conjecture \ref{conjtheorem23} \cite[Theorem 23]{Eick2007} is given in this section.  Let $i\in \mathbb{C}$ be the imaginary unit and let $D_8$ denote the dihedral group of order $8$ given by the following group presentation
$$
D_8 := \langle a, b \mid a^2 = 1, b^4 = 1, b^a = b^{-1} \rangle.
$$
Define a uniserial action of $D_8$ on the $\mathbb{Z}_2$-module $\mathbb{Z}_2[i]$ by setting $1.a := 1$, $i.a := -i$ and $1.b := i$, $i.b := i \cdot i = -1$. Then $S := D_8 \ltimes \mathbb{Z}_2[i]$ is a pro-$2$-group of coclass $3$ and the third term of the lower central series of $S$ is $1 \ltimes 2 \cdot \mathbb{Z}_2[i]$. By the uniserial action of $D_8$ on $\mathbb{Z}_2[i]$ it follows that $1 \ltimes 2^k \mathbb{Z}_2[i]$ is the $(1+2k)$-th term of the lower central series for $k \in \mathbb{N}$. Fix $k$, put $T := 1 \ltimes 2^k \mathbb{Z}_2[i] \leq S$ and let $T = T_0 > T_1 > T_2 > \cdots$ denote the lower $S$-central series of $T$. Then for $n \in \mathbb{N}_0$ we consider the action of $\endo_R T/T_n$ on $H^2(R, T/T_n)$, where $\endo_R T/T_n$ acts via the embedding $\endo_R T/T_n \rightarrow \comp(R, T/T_n)$, $\varphi \mapsto (1, \varphi)$ on $H^2(R, T/T_n)$. It can be shown that for every sufficiently large $n$ the group $\endo_R T/T_n$ does not stabilize the direct summand $H^2(R, T)$ of $H^2(R, T/T_n) \cong H^2(R,T) \oplus H^3(R, T_n)$. This contradicts \cite[Theorem 23]{Eick2007}. For further details, we refer to \cite[Appendix B]{Couson2013}.

%%%%%%%%%%%%%%%%%%%%%%%%%%%%%%%%%%%%%%%%%%%%%%%%%%%%%%%%%%%%%%%%%%%%%%%%%%%%%%%%%%%%%%%%%%%%%%%%%%%%%%%%%%%%%%%%%%%%%
%%%%%%%%%%%%%%%%%%%%%%%%%%%%%%%%%%%%%%%%%%%%%%%%%%%%%%%%%%%%%%%%%%%%%%%%%%%%%%%%%%%%%%%%%%%%%%%%%%%%%%%%%%%%%%%%%%%%%
%%%%%%%%%%%%  REFERENCES  REFERENCES REFERENCES  REFERENCES  REFERENCES REFERENCES  %%%%%%%%%%%%%%%%%%%%%%%%%%%%%%%%%
%%%%%%%%%%%%%%%  REFERENCES  REFERENCES REFERENCES  REFERENCES  REFERENCES REFERENCES  %%%%%%%%%%%%%%%%%%%%%%%%%%%%%%
%%%%%%%%%%%%%%%%%%%%%%%%%%%%%%%%%%%%%%%%%%%%%%%%%%%%%%%%%%%%%%%%%%%%%%%%%%%%%%%%%%%%%%%%%%%%%%%%%%%%%%%%%%%%%%%%%%%%%
%%%%%%%%%%%%%%%%%%%%%%%%%%%%%%%%%%%%%%%%%%%%%%%%%%%%%%%%%%%%%%%%%%%%%%%%%%%%%%%%%%%%%%%%%%%%%%%%%%%%%%%%%%%%%%%%%%%%%

\def\cprime{$'$}

% 
% % \bibliographystyle{amsxport}
% % \pagenumbering{Roman}
% 
% % \bibliography{/home/r/marcouso/Dropbox/bibtex/library2} %Debian-Version
% \bibliography{/home/martin/Dropbox/bibtex/library2} 
% % \bibliography{/media/Daten/bibtex/library2} %Ubuntu-Version
% %\bibliography{D:/bibtex/library} %Windows-Version

\begin{thebibliography}{LGM02}

\bibitem[Cou13]{Couson2013}
Martin Couson.
\newblock {\em {On the character degrees and automorphism groups of finite
  p-groups by coclass}}.
\newblock PhD thesis, 2013.

\bibitem[dS00]{Sautoy2000}
Marcus du~Sautoy.
\newblock Counting {$p$}-groups and nilpotent groups.
\newblock {\em Inst. Hautes \'Etudes Sci. Publ. Math.}, (92):63--112 (2001),
  2000.

\bibitem[Eic06]{Eick2006}
Bettina Eick.
\newblock Automorphism groups of 2-groups.
\newblock {\em J. Algebra}, 300(1):91--101, 2006.

\bibitem[ELG08]{Eick2007}
Bettina Eick and Charles Leedham-Green.
\newblock On the classification of prime-power groups by coclass.
\newblock {\em Bull. Lond. Math. Soc.}, 40(2):274--288, 2008.

\bibitem[LGM02]{Leedham-Green2002}
C.~R. Leedham-Green and S.~McKay.
\newblock {\em The structure of groups of prime power order}, volume~27 of {\em
  London Mathematical Society Monographs. New Series}.
\newblock Oxford University Press, Oxford, 2002.
\newblock Oxford Science Publications.

\end{thebibliography}
% 
% %  \bibliographystyle{plain}
% \bibliographystyle{alpha}

\end{document}